\documentclass[reqno, a4paper, 11pt]{amsart}
\usepackage[utf8]{inputenc}
\usepackage[T1]{fontenc}

\usepackage{amssymb, mathrsfs}
\usepackage{times}

\usepackage[colorlinks=true, bookmarks=true, pdfstartview=FitH, pagebackref=true, linktocpage=true]{hyperref}

\usepackage[short,nodayofweek]{datetime}
\usepackage[pagewise,running,mathlines,displaymath,switch]{lineno}

\usepackage{graphicx}
\usepackage{xcolor}
\usepackage{tabularx}
\usepackage{indentfirst}

\usepackage[utf8]{inputenc}  

\newcommand{\eq}[2]{\begin{equation}\begin{split}#1\end{split}#2\end{equation}}
\newcommand{\mat}[1]{\begin{pmatrix}#1\end{pmatrix}}

\usepackage{listings}
\usepackage[a4paper, vmargin={30mm,30mm},hmargin={20mm,20mm}, top=3.4cm, bottom=3.4cm, left=2.8cm, right=2.8cm]{geometry}

\theoremstyle{plain}
\numberwithin{equation}{section}
\newtheorem{theorem}{Theorem}[section] 
\newtheorem{lemma}{Lemma}[section] 

\newtheorem{proposition}{Proposition}[section]

\definecolor{brown}{rgb}{0.5,0,0}
\definecolor{backgroundcolor}{rgb}{0.98, 0.92, 0.73}

\def\R{\mathbb R}
\def\C{\mathbb C}
\def\bZ{\mathbb Z}

\def\cA{\mathcal A}
\def\cQ{\mathcal Q}

\def\bB{\mathcal B} 
\def\bT{\mathbb T} 
\newif\ifprint
\ifprint
\else
\fi

\author[B. Barker]{Blake Barker}
\address{Blake Barker: Department of Mathematics, Brigham Young University, Provo, UT 84602, USA }
\email{\href{mailto: B. Barker] <blake@math.byu.edu>}{blake@math.byu.edu}}

\author[T.-T. Nguyen]{Tien-Tai Nguyen}
\address{Tien-Tai Nguyen: University of Science, Vietnam National University, Hanoi, Vietnam}
\email{\href{mailto: T.-T. Nguyen <nttai.hus@vnu.edu.vn>}{nttai.hus@vnu.edu.vn}
}

\author[B.Settle]{Benjamin Settle}
\address{Benjamin Settle: Department of Mathematics, Brigham Young University, Provo, UT 84602, USA }
\email{\href{mailto: B. Settle] <bdsettle@student.byu.edu >}{bdsettle@student.byu.edu }}

\begin{document}
\allowdisplaybreaks

\setpagewiselinenumbers
\setlength\linenumbersep{100pt}

\title[Linear Rayleigh-Taylor instability]{Spectral analysis of the classical Rayleigh-Taylor instability with an upper free surface}

\begin{abstract}
In this note, we are interested in the linear Rayleigh- Taylor instability problem for the incompressible fluid with an upper free surface. Using the spectral theory of self-adjoint and compact operator, we rigorously prove the existence of infinitely many normal mode solutions to the linearized equations.  Some numerical computations are presented to support our theoretical study.
\end{abstract}

\date{\bf \today \,   at   \currenttime}

\subjclass[2010]{34B07, 47A10, 47B07, 76D05, 76E30}

\keywords{Rayleigh--Taylor instability, spectral analysis,  variational structure, Evans function}

\maketitle


\section*{Acknowledgements}
The second author thanks Wassim Aboussi for his constructive comments regarding the manuscript.  Part of this work was completed during the second author’s visit to the Institut Henri Poincaré (IHP) for the thematic program \textit{Mathematical Developments in Geophysical Fluid Dynamics} in April–May 2026. The second author gratefully acknowledges CIMPA for supporting his stay in Paris and thanks IHP for providing excellent working conditions.
\section{Introduction}

The Rayleigh--Taylor (RT) instability, first studied by Lord Rayleigh \cite{Str83} and later by  Taylor \cite{Tay50}, is well known as a gravity-driven instability that occurs when two inviscid, incompressible  fluids are vertically stacked with the heavier one on top of the lighter one. The RT instability  has attracted much attention due to both its physical and mathematical importance. Two notable applications are implosion of inertial confinement fusion capsules \cite{Lin98} and core-collapse of supernovae  \cite{Rem00}. For a detailed physical review of the RT instability, we refer to the survey articles \cite{Kull91, Zhou17_1, Zhou17_2}. 

The study of the RT instability dates back to the late 1800s, but continues to be a very active area of research. At the beginning of this century, Lafitte et al. \cite{CL00, CCLR01, HL03, Laf01} studied the linear instability of a steady state of inviscid, incompressible fluid flow, using the variational structure of the linearized system. Guo and Hwang \cite{GH03} used the celebrated framework of Grenier \cite{Gre00} to prove the nonlinear RT instability in an unbounded horizontal periodic domain $\bT\times \R$. After that, Lafitte developed equivalent results \cite{Laf08} for a density profile of quasi-isobaric  type. 
For the RT instability problem with boundary conditions,  very recently, Tan and Xu \cite{TX22} extend the result of Guo and Hwang  to the horizontally periodic slab domain $\bT\times (0,h)$ with a positive constant $h$.  

A natural question that builds on the rich history of the RT instability is to ask how a moving domain affects the RT instability in a fluid. In this article, we consider an inviscid, incompressible fluid occupying a moving domain
\[
\Omega(t) = \{(x,t)= (x_1,x_2,t) \in 2\pi L\bT \times \R\times \R_+, -h<x_2<\eta(t,x_1)\}.
\]
Here,  $\bT = \R/\bZ$ is the usual $1D$-torus and $L>0$ is the length of periodicity. We assume that $h > 0$ is the constant depth of the rigid bottom but that the surface function $\eta$ is unknown in the problem. The surface $\Gamma(t) = \{x_2 = \eta(t, x_1)\}$ is the moving upper boundary of $\Omega(t)$, and $\Gamma_h = \{x_2 = -h\}$ is the fixed lower boundary of $\Omega(t)$; see Fiugure \ref{fig:domain}.

\begin{figure}[ht]\label{fig:domain}
 \begin{center}
$
\begin{array}{c}
\includegraphics[width=0.45\textwidth]{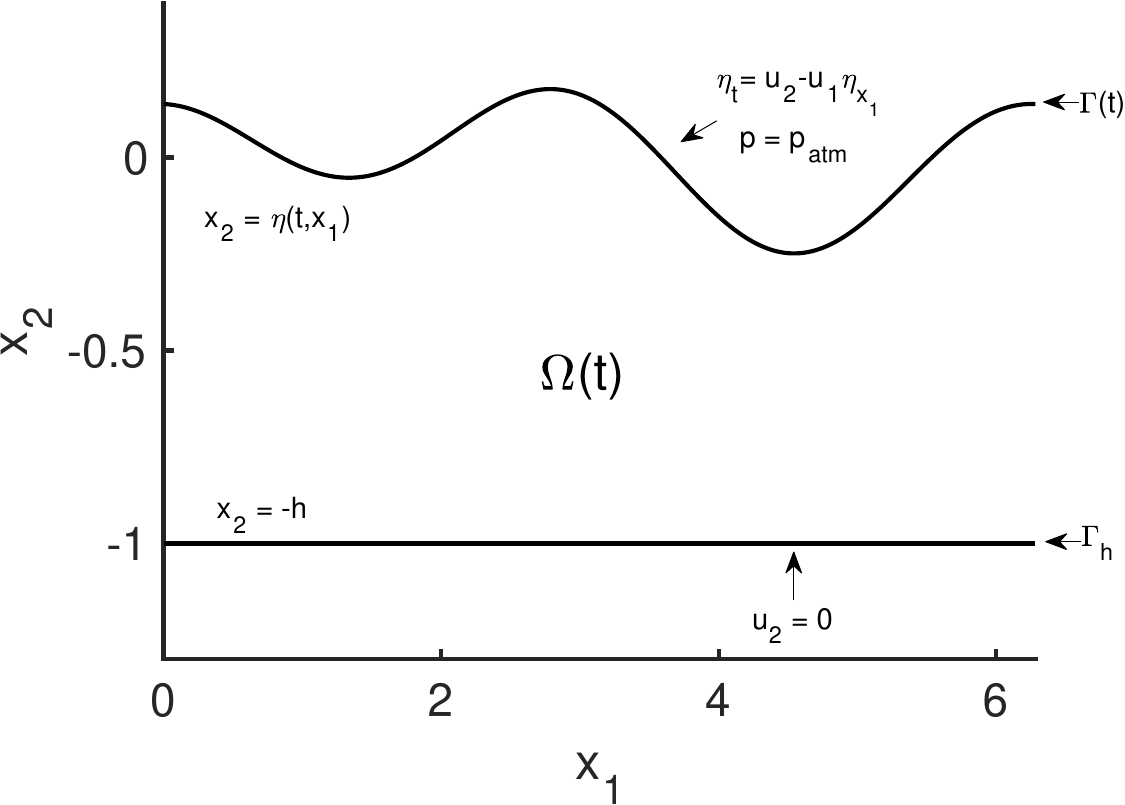}
\end{array}
$
\end{center}
\caption{Depiction of the domain, $\Omega(t)$, for $L = h = 1$.}
\end{figure}

The governing equations are the gravity-driven incompressible Euler equations  in the dimensional form: 
\begin{equation}\label{EqNS}
\begin{cases}
\partial_t \rho +\text{div}(\rho  u) =0,\\
\partial_t(\rho  u) + \text{div}(\rho  u\otimes  u) +\nabla p =- \rho g e_2,\\
\text{div} u=0.
\end{cases}
\end{equation}
 The unknowns $\rho := \rho(x,t)$, $ u := (u_1,u_2)(x,t)$, and $P :=P(x,t)$ represent the density, velocity, and pressure of the fluid, while    $g > 0$ is the gravity constant and $ e_2=(0,1)^T$. On $\Gamma(t)$, we have the following boundary conditions 
\begin{equation}\label{BoundOriginal_1}
\begin{split}
\partial_t \eta = u_2 - u_1\partial_1 \eta,\quad p=p_{atm},
\end{split}
\end{equation}
where $p_{atm}$ is the atmospheric pressure. On $\Gamma_h$, we have the usual non-penetrable boundary condition, which is 
\begin{equation}\label{BoundOriginal_2}
u_2=0.
\end{equation}
For a more physical description of the equations \eqref{EqNS} and the boundary conditions in \eqref{BoundOriginal_1}, we refer to \cite[Sect. 1.8]{Lan13}. 

The movement of the free boundary $\Gamma(t)$ and the subsequent change of the domain $\Omega(t)$ create numerous mathematical difficulties. To circumvent these, as usual, we transform the free boundary problem under consideration \eqref{EqNS}-\eqref{BoundOriginal_1}-\eqref{BoundOriginal_2} to a new problem \eqref{EqNS_Lagrangian} in the fixed domain $\Omega= 2\pi L\bT \times(-h,0)$. The derivation is shown in Section \ref{SectMainResults_1}.

The new, fixed domain problem (Equation \eqref{EqNS_Lagrangian}) admits a steady-state solution of the form
\begin{equation}\label{SteadyState}
(\rho(t,x),  u(t,x), p(t,x), \eta(t,x_1))=(\rho_0(x_2),   0, \bar p(x_2),0 ),
\end{equation}
provided that $\nabla \bar p=-g\rho_0  e_2$ and $\bar p(0)=p_{atm}$.
We are interested in the RT instability of this equilibrium solution, hence, we consider the linear instability of this equilibrium in the case that
\begin{equation}\label{RhoAssume1}
\rho_0 \text{ is strictly increasing and } 0< \rho_- =\rho_0(-h)<\rho_0(0)=\rho_+ <+\infty.
\end{equation} 
 Following Chandrasekhar \cite{Cha61}, establishing linear instability amounts to finding a value of the parameter $\lambda \in \C$  with positive real part such that there exists a solution  $\phi \in H^2(-h,0)$ of the  second-order ordinary differential equation,
\begin{equation}\label{2ndOrderEqPhi}
\lambda^2(k^2 \rho_0 \phi -(\rho_0 \phi')') =gk^2\rho_0' \phi,
\end{equation}
with the boundary conditions
\begin{equation}\label{ODEboundary}
\lambda^2\rho_+ \phi'(0)+gk^2\rho_+\phi(0)=0 \quad \text{and} \quad  \phi(-h)=0.
\end{equation}
Here, $k \in L^{-1}\bZ$ is the transverse wave number and is fixed, and $\phi$ is the vertical component of the perturbed velocity. A value of $\lambda$ for which \eqref{2ndOrderEqPhi} and \eqref{ODEboundary} admit a solution in $H^2(-h,0)$ is a growth rate of the instability, or equivalently, is a characteristic value of the linearized problem (see \cite[Chapter X, Sections 92–93]{Cha61}).

For each fixed value of $k$, there are multiple characteristics. Multiple characteristic values of the linear RT instability were first established by Cherfils and Lafitte \cite{CL00} for a specific profile, as obtained through a precise study of the ODE \eqref{2ndOrderEqPhi} on the real line. In our present setting, we are interested in describing the behavior of the characteristic values of the linearized problem.

Our study is two-fold, examining \eqref{2ndOrderEqPhi}-\eqref{ODEboundary} both theoretically and numerically.   First,  we consider the general case of an increasing profile (satisfying \eqref{RhoAssume1}) and prove the existence of infinitely many characteristic values $\lambda$ to Eq. \eqref{2ndOrderEqPhi}--\eqref{ODEboundary}. We refer to Theorem \ref{ThmGeneral} for the precise statement and to Section \ref{SectGeneral} for the proof. The proof is inspired by an operator method \cite{LN22}, developed by Lafitte and the second author, which uses spectral theory of self-adjoint and compact operators. In the second part, in the spirit of Cherfils and Lafitte \cite{CL00}, we numerically approximate characteristic values of the linear Rayleigh-Taylor problem  by computing zeros of the Evans function, a function whose zeros correspond to characteristic values. The Evans function was introduced by Evans in \cite{Eva1, Eva2, Eva3, Eva4} and since then has undergone much development; for just a few references see \cite{BHLL18,KP,HoZ}.

\section{Derivation of the governed equations and the  main results}\label{SectMainResults}

\subsection{Reformulation in flattening coordinates}\label{SectMainResults_1}

We transform the free boundary problem \eqref{EqNS} into one in the fixed domain $\Omega$ with the two boundaries $\Gamma_0= 2\pi L\bT \times \{0\}$ and $\Gamma_h$ by using the unknown free surface function $\eta$.

Following Beale \cite{Bea81} and then  Guo and Tice \cite{GT13}, we define the Poisson integral for any function $f$ in $2\pi L\bT \times (-\infty,0)$ by
\[
\mathcal Pf(x)= \sum_{ k \in L^{-1}\bZ } \frac{e^{i k x_1}}{\sqrt{2\pi L}}e^{|k|x_2}\hat f(k), \quad\text{where } \hat f(k) =\int_{2\pi L\bT} f(x_1)\frac{e^{-ik x_1}}{\sqrt{2\pi L}} dx_1.
\]
Hence, let $\theta$ be the Poisson extension of $\eta$, we flatten the coordinate domains via the following special coordinate transformation:
\begin{equation}
\Omega \ni x =(x_1,x) \mapsto (x_1, x_2+ \theta(t,x_1,x_2)(1+ \frac{x_2}h)) =: \Theta(t,x)=(y_1,y_2) \in \Omega(t).
\end{equation} 
If $\eta$ is sufficiently small (in an appropriate Sobolev space), then the mapping is a diffeomorphism. This allows us to transform the problem \eqref{EqNS} to one in the fixed spatial domain for each $t>0$. In order to write down the equations in the new coordinate system, we compute
\[
\nabla \Theta  = \begin{pmatrix} 1 &0 \\ (1+\frac{x_2}h)\partial_1\theta  & 1+\partial_2\theta \end{pmatrix}.
\]
and thus define
$ \cA := ((\nabla \Theta)^{-1})^T$. We write the differential operators $\nabla_{\cA}, \text{div}_{\cA}$ with their actions given by
\[
(\nabla_{\cA} f)_i :=  \cA_{ij}\partial_jf, \quad \text{div}_{\cA} X := \sum_{1\leq i,j\leq 2}\cA_{ij}\partial_jX_i.
\]

We now define the density $\tilde \rho$, the velocity ${\tilde  u} $ and the pressure $\tilde p$ on the domain $\Omega$ by the composition 
\[
 (\tilde\rho, {\tilde u},\tilde  p)(t,x) = (\rho,  u,  p)(t,\Theta(t,x)).
\]
We transform \eqref{EqNS}-\eqref{BoundOriginal_1}-\eqref{BoundOriginal_2} into the following system in the new coordinates after dropping the tildes:
\begin{equation}\label{EqNS_Lagrangian}
\begin{cases}
\partial_t\rho - K \partial_t  \theta\partial_2\rho + \text{div}_{\cA}(\rho   u)=0 \quad&\text{in } \Omega, \\
\rho( \partial_t  u - K\partial_t \theta \partial_2  u+ u\cdot \nabla_{\cA} u)+\nabla_{\cA} p = -g\rho  e_2 \quad&\text{in } \Omega, \\
\text{div}_{\cA}  u=0 \quad&\text{in } \Omega, \\
\partial_t\eta = u_2 - u_1\partial_1\eta \quad&\text{on } \Gamma_0, \\
p =p_{atm} \quad&\text{on } \Gamma_0, \\
u_2=0 \quad&\text{on } \Gamma_h.
\end{cases}
\end{equation}

We now rephrase \eqref{EqNS_Lagrangian} in a perturbation formulation around the steady state solution \eqref{SteadyState}.  We define 
\begin{equation}
\zeta = \rho-\rho_0 - \rho_0'\theta, \quad q= p- \bar p+g\rho_0\theta, \quad   u =  u- 0, \quad \theta=  \theta
\end{equation}
The equations for perturbation terms  write
\begin{equation}\label{EqPertur}
\begin{cases}
\partial_t \zeta +\rho_0' u_2= \cQ_1 \quad&\text{in } \Omega,\\
\rho_0\partial_t  u+ \nabla q +g\zeta e_2= \cQ_2 \quad&\text{in }\Omega,\\
\text{div} u= \cQ_3 \quad&\text{in } \Omega,\\
\partial_t \eta - u_2 =\cQ_4 \quad&\text{on } \Gamma_0,\\
q-g\rho_+ \eta=0\quad&\text{on } \Gamma_0,\\
u_2=0 \quad&\text{on }\Gamma_h.
\end{cases}
\end{equation} 
Here $\cQ_1,  \cQ_2, \cQ_3$ and $\cQ_4$ are  nonlinear terms.

\subsection{The linearized equations and main results}
Omitting the nonlinear terms in \eqref{EqPertur}, we obtain the following linearized equations
\begin{equation}\label{EqLinearized}
\begin{cases}
\partial_t \zeta +\rho_0' u_2= 0 \quad&\text{in } \Omega,\\
\rho_0\partial_t  u+ \nabla q +g\zeta  e_2=0  \quad&\text{in }\Omega,\\
\text{div} u= 0 \quad&\text{in } \Omega,\\
\partial_t \eta - u_2 =0\quad&\text{on } \Gamma_0,\\
q-g\rho_+ \eta=0\quad&\text{on } \Gamma_0,\\
u_2=0 \quad&\text{on }\Gamma_h.
\end{cases}
\end{equation}
Following Chandrasekhar \cite{Cha61}, we seek growing normal modes of \eqref{EqLinearized}
\[
(\zeta, u, q)(t,x)= e^{\lambda t}(\omega, v, r)(x), \quad \eta(t,x_1)=e^{\lambda t} \zeta(x_1).
\]
We deduce 
\[
\begin{cases}
\lambda \omega+\rho_0'v_2=0 \quad&\text{in }\Omega,\\
\lambda\rho_0 v+ \nabla r +g\omega  e_2= 0\quad&\text{in }\Omega,\\
\text{div} v =0\quad&\text{in }\Omega,\\
\lambda \zeta= v_2 \quad&\text{on } \Gamma_0,\\
r-g\rho_+\zeta=0 \quad&\text{on } \Gamma_0,\\
v_2=0 \quad&\text{on }\Gamma_h.
\end{cases}
\]
Clearly, we have
\[
\omega = -\frac1{\lambda}\rho_0'v_2, \quad \zeta= \frac1{\lambda}v_2|_{\Gamma_0}
\]
and
\[
\begin{cases}
\lambda^2 \rho_0  v+\lambda \nabla r - g\rho_0' v_2  e_2=0 \quad&\text{in }\Omega,\\
\text{div} v=0 \quad&\text{in }\Omega,\\
\lambda r-g\rho_+v_2= 0 \quad&\text{on } \Gamma_0,\\
v_2=0 \quad&\text{on }\Gamma_h.
\end{cases}
\]
Let $k \in L^{-1}\bZ\setminus \{0\}$, we assume further that
\[
( v,r)(x)=  (\sin(kx_1)\psi(x_2),\cos(kx_1)\phi(x_2), \cos(kx_1)\pi(x_2)).
\]
We deduce that 
\begin{equation}\label{SystPsiPhi}
\begin{cases}
\lambda^2\rho_0 \psi -\lambda k\pi =0 \quad&\text{in } (-h,0),\\
\lambda^2\rho_0\phi + \lambda\pi' =g\rho_0'\phi \quad&\text{in } (-h,0),\\
k\psi+\phi'=0 \quad&\text{in } (-h,0),\\
\lambda\pi(0) -g\rho_+\phi(0) =0, \\
\phi(-h)=0.
\end{cases}
\end{equation}
We obtain from the first and third line of \eqref{SystPsiPhi} that 
\[
\psi= -\frac{\phi'}k, \quad \pi = \frac{\lambda\rho_0 \psi}k= - \frac{\lambda\rho_0 \phi'}{k^2}.
\]
We thus obtain from \eqref{SystPsiPhi} a second-order ODE for $\phi$ in $(-h,0)$, that is \eqref{2ndOrderEqPhi} 
\[
\lambda^2(k^2\rho_0\phi - (\rho_0\phi')')= gk^2\rho_0'\phi,
\]
with the boundary conditions \eqref{ODEboundary}, 
\[
\lambda^2\rho_+ \phi'(0)+gk^2\rho_+\phi(0)=0, \quad \phi(-h)=0.
\]

For any increasing density profile $\rho_0$,  we necessarily have the uniform boundedness of $\lambda$ in $k$, whose proof is based on \cite[Lemma 2.1]{LN22}.
 \begin{lemma}\label{Lemcharacteristic valueReal}
Let $\rho_0$ be increasing, all characteristic values $\lambda$ for Eq. \eqref{2ndOrderEqPhi}-\eqref{ODEboundary} in $H^2(-h,0)$ are always real and satisfy 
\[
\lambda \leqslant \sqrt{\frac{g}{L_0}},
\]
where $L_0:= \big(\max_{(-h,0)} \frac{\rho_0'}{\rho_0}\big)^{-1}$ is the characteristic length of the density profile.
\end{lemma}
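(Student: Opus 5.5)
We multiply \eqref{2ndOrderEqPhi} by $\bar\phi$, integrate over $(-h,0)$, and integrate the term $-(\rho_0\phi')'\bar\phi$ by parts. Since $\phi(-h)=0$, only the boundary term at $x_2=0$ survives, and it equals $-\rho_+\phi'(0)\bar\phi(0)$. Using the upper boundary condition in \eqref{ODEboundary} we substitute $\lambda^2\rho_+\phi'(0)=-gk^2\rho_+\phi(0)$. This gives the identity
\[
\lambda^2\int_{-h}^0\big(k^2\rho_0|\phi|^2+\rho_0|\phi'|^2\big)\,dx_2 \;-\; gk^2\rho_+|\phi(0)|^2 \;=\; gk^2\int_{-h}^0\rho_0'|\phi|^2\,dx_2 .
\]
We may take $\lambda\neq0$, since $\lambda$ is sought with positive real part; otherwise \eqref{2ndOrderEqPhi} degenerates. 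We also take $\phi\not\equiv0$.

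\textbf{Reality of $\lambda$.} Set $A:=\int_{-h}^0\rho_0(k^2|\phi|^2+|\phi'|^2)\,dx_2$. Then $A>0$, because $\rho_0\geq\rho_->0$ and $\phi\neq0$. The other quantities $gk^2\rho_+|\phi(0)|^2$ and $gk^2\int\rho_0'|\phi|^2$ are real and nonnegative, since $\rho_0$ is increasing. Hence
\[
\lambda^2=\frac{gk^2\big(\rho_+|\phi(0)|^2+\int\rho_0'|\phi|^2\big)}{A}\in[0,\infty),
\]
so $\lambda$ is real. The case $\lambda^2=0$ is excluded, so $\lambda^2>0$, and we choose the root with positive real part.

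\textbf{Upper bound.} We must bound the boundary term $\rho_+|\phi(0)|^2$, which cannot be absorbed pointwise. The plan is to rewrite it as an interior integral using $\phi(-h)=0$:
\[
\rho_+|\phi(0)|^2=\int_{-h}^0(\rho_0|\phi|^2)'\,dx_2=\int_{-h}^0\rho_0'|\phi|^2\,dx_2+2\,\mathrm{Re}\int_{-h}^0\rho_0\phi'\bar\phi\,dx_2 .
\]
This gives
\[
\lambda^2A=gk^2\Big(2\int\rho_0'|\phi|^2+2\,\mathrm{Re}\int\rho_0\phi'\bar\phi\Big).
\]
Next we use $\rho_0'\le\rho_0/L_0$ and the bound $2|k||\phi'||\phi|\le k^2|\phi|^2+|\phi'|^2$ to control the right-hand side by a multiple of $A$. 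Here lies the expected difficulty: the constant must come out as exactly $g/L_0$, not merely as some $C(g,k,L_0)$.

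\textbf{Getting the sharp constant.} The key is to handle the boundary term more cleverly, writing $\phi(0)$ via a weighted integral as in \cite[Lemma 2.1]{LN22}. For a rescaled profile one can use the monotonicity $\rho_+\ge\rho_0$ together with a weight such as $e^{|k|x_2}$, for which the identity
\[
|\phi(0)|^2=\int_{-h}^0\big(e^{2|k|x_2}|e^{-|k|x_2}\phi|^2\big)'\,dx_2
\]
yields $k^2\rho_+|\phi(0)|^2\le |k|\int\rho_0(k^2|\phi|^2+|\phi'|^2)$ up to profile factors. I would first try to mimic the $\bT\times\R$ argument of \cite{LN22} directly, and then adjust the boundary estimate until the bound $\lambda^2\le g/L_0$ closes. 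I expect this step — matching the free-surface term to the stated constant $\sqrt{g/L_0}$, which may require exploiting $k$-dependence or a more careful test function — to be the main obstacle.
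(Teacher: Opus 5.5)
Your strategy of testing the equation against $\bar\phi$ and using the boundary condition is the right one. However, there is a sign error in your very first identity, and it is what creates the ``main obstacle'' you could not resolve.

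Integration by parts gives the boundary contribution $-\lambda^2\rho_+\phi'(0)\bar\phi(0)$. Substituting $\lambda^2\rho_+\phi'(0)=-gk^2\rho_+\phi(0)$ turns it into $+gk^2\rho_+|\phi(0)|^2$, not $-gk^2\rho_+|\phi(0)|^2$. The correct identity is
\[
\lambda^2 A+gk^2\rho_+|\phi(0)|^2=gk^2\int_{-h}^0\rho_0'|\phi|^2\,dx .
\]
This is the same identity encoded in the surface term $\frac{gk^2\rho_+}{\lambda^2}\vartheta(0)\varrho(0)$ of $\bB_{k,\lambda}$: the free surface, with heavy fluid below the atmosphere, is stabilizing.

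With this sign the upper bound takes one line. Drop the nonnegative surface term, then use $\rho_0'\le\rho_0/L_0$ and $k^2\int\rho_0|\phi|^2\le A$:
\[
\lambda^2A\le gk^2\int_{-h}^0\rho_0'|\phi|^2\,dx\le\frac{gk^2}{L_0}\int_{-h}^0\rho_0|\phi|^2\,dx\le\frac{g}{L_0}\,A .
\]
Hence $\lambda^2\le g/L_0$, and in fact strictly, since $\phi'\not\equiv0$. No weighted trace inequality is needed.

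Your reality argument must also be adjusted. The quantity $\lambda^2=gk^2\big(\int\rho_0'|\phi|^2-\rho_+|\phi(0)|^2\big)/A$ is still real, but it is no longer visibly nonnegative, so your conclusion $\lambda^2\in[0,\infty)$ is unsupported. Use $\mathrm{Re}\,\lambda>0$ instead: if $\lambda=a+ib$ with $a>0$, then $\mathrm{Im}\,\lambda^2=2ab=0$ forces $b=0$.

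As written, your upper-bound step cannot be completed, and the problem is not the lack of a cleverer test function. With your sign you would need
\[
k^2\Big(\rho_+|\phi(0)|^2+\int_{-h}^0\rho_0'|\phi|^2\,dx\Big)\le \frac{A}{L_0},
\]
and any argument built only from inequalities valid for all $\phi\in H_\star^1(-h,0)$ fails. For $k>0$ and $\phi=\sinh(k(x+h))$ one has $A\le\rho_+k\sinh(kh)\cosh(kh)$. Hence $k^2\rho_+|\phi(0)|^2/A\ge k\tanh(kh)$, which exceeds $1/L_0$ once $k$ is large. Your weighted estimate $k^2\rho_+|\phi(0)|^2\lesssim |k|A$ reflects exactly this growth and can only give a $k$-dependent constant. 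So the proposal does not prove the stated bound; correcting the sign removes the difficulty entirely.
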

In view of Lemma \ref{Lemcharacteristic valueReal}, we only consider positive $\lambda$ in what follows and we look for functions $\phi$ to be real.
 
Our main result is to prove the existence of infinitely many characteristic values to \eqref{2ndOrderEqPhi}-\eqref{ODEboundary}, as stated in Theorem \ref{ThmGeneral}. The proof is given in Section \ref{sec:ThmGeneral} and follows the strategy used in \cite{LN22}.
\begin{theorem}\label{ThmGeneral}
Let $k>0$ and let $\rho_0$ satisfy \eqref{RhoAssume1}.  There exists an infinite  sequence $(\lambda_j, \phi_j)_{j\geq 1}$ with $\lambda_j \in (0,\sqrt{\frac{g}{L_0}})$ and  $\phi_j \in H^2(-h,0)$ satisfying \eqref{2ndOrderEqPhi}-\eqref{ODEboundary}. Furthermore, the sequence $(\lambda_j)_{j\geq 1}$ decreases toward 0 as $j\to \infty$.
\end{theorem}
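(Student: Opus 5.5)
Throughout, write $\mu:=\lambda^2>0$. The plan is to recast \eqref{2ndOrderEqPhi}--\eqref{ODEboundary} as an eigenvalue problem $\mu^{-1}$-type for a compact self-adjoint operator on a Hilbert space adapted to the boundary condition at $x_2=-h$, following \cite{LN22}. I will assume $\rho_0\in C^1([-h,0])$.

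\textbf{Step 1: weak formulation.} Multiply \eqref{2ndOrderEqPhi} by a real test function $\theta$ with $\theta(-h)=0$ and integrate by parts. The only boundary term is $-\rho_+\phi'(0)\theta(0)$. By \eqref{ODEboundary}, $\mu\rho_+\phi'(0)=-gk^2\rho_+\phi(0)$, so this term becomes $gk^2\rho_+\phi(0)\theta(0)$. The problem is therefore equivalent to
\[
\mu\, B(\phi,\theta)=gk^2\Big(\int_{-h}^0\rho_0'\phi\theta\,dx_2-\rho_+\phi(0)\theta(0)\Big)=:gk^2\,Y(\phi,\theta),
\qquad
B(\phi,\theta):=\int_{-h}^0\rho_0(k^2\phi\theta+\phi'\theta')\,dx_2 .
\]
This must hold for all $\theta\in V:=\{\theta\in H^1(-h,0):\theta(-h)=0\}$.

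\textbf{Step 2: the operator.} Since $\rho_0\ge\rho_->0$ and $k>0$, $B$ is an inner product on $V$ equivalent to the $H^1$ one. By Riesz, define the bounded operator $Q:V\to V$ by $B(Q\phi,\theta)=Y(\phi,\theta)$.
\begin{itemize}
\item $Q$ is self-adjoint because $Y$ is symmetric.
\item $Q$ is compact: the part $\int\rho_0'\phi\theta$ factors through the compact embedding $H^1\hookrightarrow L^2$, and the part $\phi(0)\theta(0)$ has rank one.
\end{itemize}
Hence the spectral theorem gives real eigenvalues accumulating only at $0$. The positive ones are characterized by the min-max formula
\[
\gamma_j=\sup_{\dim W=j}\ \inf_{\phi\in W\setminus\{0\}}\frac{Y(\phi,\phi)}{B(\phi,\phi)} .
\]

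\textbf{Step 3: infinitely many positive eigenvalues (the main point).} I need, for every $n$, an $n$-dimensional subspace of $V$ on which $Y$ is positive definite.
\begin{itemize}
\item Choose $n$ disjoint open intervals $I_1,\dots,I_n\subset(-h,0)$ with closures away from $0$.
\item Since $\rho_0$ is strictly increasing and $C^1$, $\rho_0'\ge0$, and $\rho_0'$ is not identically zero on any $I_i$ (otherwise $\rho_0$ would be constant there).
\item Take nonzero smooth bumps $\phi_i$ supported in $I_i$, positive on the open set where $\rho_0'>0$. Then $\phi_i(0)=0$ and $Y(\phi_i,\phi_i)=\int\rho_0'\phi_i^2>0$.
\item By disjointness of supports, $Y$ is diagonal and positive definite on $\mathrm{span}\{\phi_i\}$.
\end{itemize}
Hence $Q$ has at least $n$ positive eigenvalues for every $n$. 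Its positive eigenvalues therefore form a sequence $\gamma_1\ge\gamma_2\ge\dots>0$ with $\gamma_j\to0$, and each eigenspace is finite-dimensional. I will take distinct values, which are still infinitely many, so the sequence is strictly decreasing. Setting $\lambda_j:=\sqrt{gk^2\gamma_j}$ gives $\lambda_j\searrow0$.

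\textbf{Step 4: regularity and the bound.} For an eigenfunction $\phi_j$, take $\theta\in C_c^\infty(-h,0)$ first. This shows $(\rho_0\phi_j')'=k^2\rho_0\phi_j-gk^2\lambda_j^{-2}\rho_0'\phi_j\in L^2$ in the distributional sense. Since $\rho_0\in C^1$ and $\rho_0\ge\rho_-$, it follows that $\phi_j\in H^2$ and \eqref{2ndOrderEqPhi} holds a.e. Returning to general $\theta$ with $\theta(0)\neq0$ then recovers the boundary condition at $0$, while $\phi_j(-h)=0$ is built into $V$.

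The upper bound $\lambda_j\le\sqrt{g/L_0}$ follows from Lemma \ref{Lemcharacteristic valueReal}. To get strict inequality, note that
\[
\lambda_j^2B(\phi_j,\phi_j)=gk^2Y(\phi_j,\phi_j)\le gk^2L_0^{-1}\int\rho_0\phi_j^2<\frac{g}{L_0}B(\phi_j,\phi_j),
\]
where the final inequality is strict because $\phi_j'\not\equiv0$.
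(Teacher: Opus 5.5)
Your proof is correct, and it takes a genuinely different route from the paper.

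\textbf{The paper's route.} The paper keeps the surface term $\frac{gk^2\rho_+}{\lambda^2}\vartheta(0)\varrho(0)$ inside the coercive form $\bB_{k,\lambda}$. It therefore works with a $\lambda$-dependent family of \emph{positive} compact self-adjoint operators $S_{k,\lambda}=\sqrt{\rho_0'}\,Y_{k,\lambda}^{-1}\sqrt{\rho_0'}$. It must then solve the scalar dispersion relation \eqref{EqDefineLambda}, $\gamma_n(k,\lambda_n)=\lambda_n^2/(gk^2)$, one branch at a time. This requires:
\begin{itemize}
\item differentiability of the eigenvalue branches in $\lambda$ (Lemma \ref{LemGammaCont});
\item the monotonicity Lemma \ref{LemGammaDecrease};
\item the two limits \eqref{LimitGammaRight}--\eqref{LimitGammaLeft};
\item separate contradiction arguments for the monotonicity of $(\lambda_n)$ and for $\lambda_n\to0$.
\end{itemize}

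\textbf{Your route.} You observe that the $\lambda^2$ in the boundary condition exactly matches the boundary term from integration by parts. The problem is therefore a linear eigenvalue problem in $\mu=\lambda^2$. Moving the surface term into the right-hand form $Y$ gives a single $\lambda$-independent compact self-adjoint operator $Q$ on $(V,B)$. Its positive eigenvalues are precisely $\lambda^2/(gk^2)$.

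The price is that $Y$ is indefinite. You handle this correctly with the min--max count using disjointly supported bumps that vanish at $x_2=0$. The negative part $-\rho_+\phi(0)\theta(0)$ has rank one, so it cannot remove the infinitely many positive directions.

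What you gain is that existence, strict decrease and $\lambda_j\to0$ all follow directly from the spectral theorem for one compact operator. No regularity of eigenvalue branches and no fixed-point or monotonicity argument in $\lambda$ is needed. As a bonus, your $\lambda_j$ exhaust all positive characteristic values, and your direct computation gives the strict bound $\lambda_j<\sqrt{g/L_0}$ without relying on Lemma \ref{Lemcharacteristic valueReal}. Your standing assumption $\rho_0\in C^1([-h,0])$ is the same one the paper uses implicitly through $\sqrt{\rho_0'}$ and $\max\rho_0'/\rho_0$.

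What the paper's approach buys is mainly portability. The framework of \cite{LN22} is built for settings such as the viscous problem, where viscosity adds terms linear in $\lambda$, so no linear structure in $\lambda^2$ is available. It also yields extra information, such as the monotonicity of $\gamma_n(k,\lambda)$ in $\lambda$.
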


In Section \ref{sec:numerics}, we focus on two particular density profiles satisfying \eqref{RhoAssume1}, that are a linear function and an exponential function. We derive numerically a quantification of the different growth rates.

\section{The general increasing profile}\label{SectGeneral}

For notational conveniences, we write $x$ instead of $x_2$ in what follows.

\subsection{Auxiliary operators}

We define the frequenlty used function space (for $j=1$ or 2)  
\[
H_\star^j(-h,0) =\{\phi \in H^j(-h,0), \phi(-h)=0\}.
\]
Thanks to the positivity of $\lambda$, we study the following bilinear form.
\begin{proposition}\label{PropPropertyR}
Let us denote by 
\begin{equation}\label{1stBilinearForm}
\begin{split}
\bB_{k,\lambda}(\vartheta, \varrho) & := \frac{gk^2\rho_+}{\lambda^2} \vartheta(0)\varrho(0) + \int_{-h}^0 \rho_0(k^2\vartheta  \varrho + \vartheta'  \varrho') dx.
\end{split} 
\end{equation}
The bilinear form $\bB_{k,\lambda}$ is a continuous and coercive on $H_\star^1(-h,0)$. Furthermore, let  $(H_\star^1(-h,0))'$ be the dual space of $H_\star^1(-h,0)$, which is associated with the norm $\sqrt{\bB_{k,\lambda}(\cdot,\cdot)}$. There exists a unique operator  
\[
Y_{k,\lambda} \in  \mathcal{L}(H_\star^1(-h,0), (H_\star^1(-h,0))'),
\] that is also bijective,  such that
 \begin{equation}\label{EqMathcalB_a}
\bB_{k,\lambda}(\vartheta, \varrho) = \langle Y_{k,\lambda}\vartheta,  \varrho\rangle
\end{equation}
for all $\vartheta, \varrho \in H_\star^1(-h,0)$.
\end{proposition}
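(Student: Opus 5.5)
We will prove continuity and coercivity of $\bB_{k,\lambda}$ directly on $H_\star^1(-h,0)$, equipped with its $H^1(-h,0)$ norm, and then apply the Riesz representation theorem (equivalently, Lax--Milgram) to obtain $Y_{k,\lambda}$.

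\emph{Continuity.} The integral term is bounded by $\rho_+\max(k^2,1)\|\vartheta\|_{H^1}\|\varrho\|_{H^1}$, since $0<\rho_-\le\rho_0\le\rho_+$ by \eqref{RhoAssume1}. For the boundary term we use the one-dimensional trace estimate: for $\vartheta\in H^1_\star(-h,0)$ we have $\vartheta(0)=\int_{-h}^0\vartheta'\,dx$, because $\vartheta(-h)=0$. Cauchy--Schwarz then gives $|\vartheta(0)|\le \sqrt h\,\|\vartheta'\|_{L^2}$. Hence
\[
\Big|\frac{gk^2\rho_+}{\lambda^2}\vartheta(0)\varrho(0)\Big|\le \frac{gk^2\rho_+h}{\lambda^2}\|\vartheta\|_{H^1}\|\varrho\|_{H^1},
\]
and $\bB_{k,\lambda}$ is continuous. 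Here $\lambda>0$ and $k>0$ are fixed.

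\emph{Coercivity.} The boundary term in $\bB_{k,\lambda}(\vartheta,\vartheta)$ is nonnegative, since $\lambda^2>0$. Therefore
\[
\bB_{k,\lambda}(\vartheta,\vartheta)\ge \rho_-\min(k^2,1)\|\vartheta\|_{H^1}^2 .
\]
The only ingredients are the lower bound $\rho_0\ge\rho_-$ and $k\neq0$; no Poincaré inequality is needed because of the $k^2$ term. Together with continuity, this shows that $\sqrt{\bB_{k,\lambda}(\cdot,\cdot)}$ is a norm on $H^1_\star(-h,0)$ equivalent to the $H^1$ norm. Since $\bB_{k,\lambda}$ is also symmetric, it is an inner product making $H^1_\star(-h,0)$ a Hilbert space; completeness follows because $H^1_\star(-h,0)$ is closed in $H^1$, as the trace at $-h$ is continuous.

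\emph{The operator.} Define $Y_{k,\lambda}\vartheta := \bB_{k,\lambda}(\vartheta,\cdot)$. By continuity this is a bounded linear functional, so $Y_{k,\lambda}\in\mathcal L(H^1_\star,(H^1_\star)')$ and \eqref{EqMathcalB_a} holds by construction. Uniqueness is immediate: if $Y$ and $\tilde Y$ both satisfy \eqref{EqMathcalB_a}, then $\langle (Y-\tilde Y)\vartheta,\varrho\rangle=0$ for all $\varrho$. Bijectivity is exactly the Riesz representation theorem for the inner product $\bB_{k,\lambda}$: every $f\in(H^1_\star)'$ equals $\bB_{k,\lambda}(\vartheta_f,\cdot)$ for a unique $\vartheta_f$. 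Moreover, $Y_{k,\lambda}$ is an isometry when the dual carries the norm induced by $\sqrt{\bB_{k,\lambda}}$. Alternatively, Lax--Milgram gives the same conclusion from coercivity, with the bound $\|Y_{k,\lambda}^{-1}\|\le (\rho_-\min(k^2,1))^{-1}$.

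None of these steps is a real obstacle. The only point needing care is the trace bound $|\vartheta(0)|\le\sqrt h\|\vartheta'\|_{L^2}$ that controls the boundary term, and it relies on the vanishing at $-h$ built into $H^1_\star$.
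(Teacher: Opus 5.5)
Your proposal is correct and follows the same route as the paper. You get coercivity from $\rho_0\ge\rho_->0$, $k\neq0$ and the nonnegativity of the boundary term (since $\lambda>0$), and then apply the Riesz representation theorem for the inner product $\bB_{k,\lambda}$; along the way you make explicit what the paper's two-line proof leaves implicit, namely continuity via $|\vartheta(0)|\le\sqrt h\,\|\vartheta'\|_{L^2}$ (from $\vartheta(-h)=0$), closedness of $H^1_\star(-h,0)$ in $H^1(-h,0)$, and equivalence of $\sqrt{\bB_{k,\lambda}(\cdot,\cdot)}$ with the $H^1$ norm.
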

\begin{proof}
Clearly, we have the coerciveness of $\bB_{k,\lambda}$ from \eqref{1stBilinearForm}. The existence of $Y_{k,\lambda}$ is due to Riesz's representation theorem. 
\end{proof}

The next proposition describes the properties of $Y_{k,\lambda}$.

\begin{proposition}\label{PropInverseOfR}
We have the following results.
\begin{enumerate}
\item For all $\vartheta \in H_\star^1(-h,0)$, we have
\[
Y_{k,\lambda}\vartheta=k^2 \rho_0 \vartheta -(\rho_0 \vartheta')' \quad\text{ in } \mathcal{D}'(-h,0).
\]  

\item Let $f\in L^2(-h,0)$ be given, there exists a unique solution  $\vartheta \in H_\star^1(-h,0)$ of 
\begin{equation}\label{EqY=f}
Y_{k,\lambda}\vartheta = f \text{ in } ( H_\star^1(-h,0))'.
\end{equation}
Moreover, $\vartheta\in H_\star^2(-h,0)$ and satisfies the boundary conditions \eqref{ODEboundary}.
\end{enumerate}
\end{proposition}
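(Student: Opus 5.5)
For part (1) I test the identity \eqref{EqMathcalB_a} against $\varrho \in C_c^\infty(-h,0)$. Such $\varrho$ lies in $H_\star^1(-h,0)$ and satisfies $\varrho(0)=0$, so the boundary term in \eqref{1stBilinearForm} drops out and
\[
\langle Y_{k,\lambda}\vartheta,\varrho\rangle = \int_{-h}^0 \bigl(k^2\rho_0\vartheta\varrho + \rho_0\vartheta'\varrho'\bigr)\,dx .
\]
By the definition of the distributional derivative, the second integral equals $\langle -(\rho_0\vartheta')',\varrho\rangle_{\mathcal D',\mathcal D}$. This is legitimate because $\rho_0\vartheta' \in L^2$: $\rho_0$ is bounded, being monotone with finite endpoint values. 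Hence $Y_{k,\lambda}\vartheta = k^2\rho_0\vartheta - (\rho_0\vartheta')'$ in $\mathcal D'(-h,0)$.

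For part (2), existence and uniqueness come from Lax–Milgram, or equivalently from the bijectivity in Proposition \ref{PropPropertyR}. The functional $\varrho \mapsto \int_{-h}^0 f\varrho\,dx$ is continuous on $H_\star^1(-h,0)$, with bound $\|f\|_{L^2}\|\varrho\|_{L^2} \le C\|f\|_{L^2}\sqrt{\bB_{k,\lambda}(\varrho,\varrho)}$, using $\rho_0\ge\rho_->0$. It therefore defines an element of $(H_\star^1)'$, and there is a unique $\vartheta$ with $Y_{k,\lambda}\vartheta=f$.

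The regularity step is the main point. By part (1), $(\rho_0\vartheta')' = k^2\rho_0\vartheta - f \in L^2(-h,0)$, so $\rho_0\vartheta' \in H^1(-h,0)$. To get $\vartheta \in H^2$ I need to divide by $\rho_0$ and differentiate. This requires $\rho_0$ to be regular enough, at least $\rho_0 \in W^{1,\infty}$ or $H^1$; I will take this as implicit in \eqref{RhoAssume1} (the ODE already involves $\rho_0'$). Then $1/\rho_0$ is bounded with $(1/\rho_0)' = -\rho_0'/\rho_0^2$ bounded, so $\vartheta' = (\rho_0\vartheta')/\rho_0 \in H^1$ and hence $\vartheta\in H_\star^2(-h,0)$. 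The condition $\vartheta(-h)=0$ is built into the space.

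It remains to recover the boundary condition at $0$. For general $\varrho\in H_\star^1$, I integrate by parts in $\int_{-h}^0 \rho_0\vartheta'\varrho'\,dx$, which is now allowed since $\rho_0\vartheta'\in H^1$. The boundary term at $-h$ vanishes because $\varrho(-h)=0$, and the interior terms cancel against $\int_{-h}^0 f\varrho\,dx$ by the equation. What is left is
\[
\Bigl(\frac{gk^2\rho_+}{\lambda^2}\vartheta(0) + \rho_+\vartheta'(0)\Bigr)\varrho(0) = 0 .
\]
Choosing $\varrho$ with $\varrho(0)\neq 0$ and multiplying by $\lambda^2$ gives $\lambda^2\rho_+\vartheta'(0)+gk^2\rho_+\vartheta(0)=0$, which is \eqref{ODEboundary}.
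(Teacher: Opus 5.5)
Your proposal is correct and follows essentially the argument the paper has in mind. The paper omits the proof and points to the bootstrap argument of \cite[Proposition 3.3]{LN22}, which has the same structure as yours: Riesz/Lax--Milgram for existence and uniqueness, the distributional equation to get $\rho_0\vartheta'\in H^1$ and then $\vartheta\in H^2$, and integration by parts against some $\varrho$ with $\varrho(0)\neq 0$ to recover the natural boundary condition at $x=0$. Your remark that passing from $\rho_0\vartheta'\in H^1$ to $\vartheta\in H^2$ needs extra regularity of $\rho_0$ (e.g.\ $\rho_0\in W^{1,\infty}$) is well taken: \eqref{RhoAssume1} alone does not guarantee this, and the paper assumes it only implicitly.
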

The proof of Proposition \ref{PropInverseOfR} is due to a bootstrap argument, which is followed by \cite[Proposition 3.3]{LN22}. Hence we omit the details here. 

We have the following proposition on $Y_{k,\lambda}^{-1}$, thanks to the continuous injection  from $H^2(-h,0)$ to $L^2(-h,0)$, in the same line of \cite[Proposition 3.4]{LN22}.
\begin{proposition}\label{RemNormR}
The operator $Y_{k,\lambda}^{-1} : L^2(-h,0) \to L^2(-h,0)$ is compact and self-adjoint. 
\end{proposition}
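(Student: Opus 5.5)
The plan is to view $Y_{k,\lambda}^{-1}$ as the composition of three maps. First, the natural injection $J: L^2(-h,0)\to (H_\star^1(-h,0))'$, $\langle Jf,\varrho\rangle = \int_{-h}^0 f\varrho\,dx$. Second, the bounded inverse $Y_{k,\lambda}^{-1}:(H_\star^1(-h,0))'\to H_\star^1(-h,0)$ from Proposition \ref{PropPropertyR}. Third, the embedding $H_\star^1(-h,0)\hookrightarrow L^2(-h,0)$. Proposition \ref{PropInverseOfR}(2) shows that for $f\in L^2$ the solution $\vartheta = Y_{k,\lambda}^{-1}f$ is exactly this composition. So the operator in question is $\iota\circ Y_{k,\lambda}^{-1}\circ J$, and compactness and self-adjointness can be checked on this factorization.

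\emph{Boundedness into $H^1_\star$.} Take $f\in L^2(-h,0)$ and set $\vartheta=Y_{k,\lambda}^{-1}f$. Test \eqref{EqY=f} against $\varrho=\vartheta$ to get
\[
\bB_{k,\lambda}(\vartheta,\vartheta)=\int_{-h}^0 f\vartheta\,dx\le \|f\|_{L^2}\|\vartheta\|_{L^2}.
\]
Since $\rho_0\ge\rho_->0$ and the boundary term is nonnegative, we have $\bB_{k,\lambda}(\vartheta,\vartheta)\ge \rho_-\min(k^2,1)\|\vartheta\|_{H^1}^2$. Combining the two inequalities gives $\|\vartheta\|_{H^1}\le C\|f\|_{L^2}$ with $C=(\rho_-\min(k^2,1))^{-1}$. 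Hence $Y_{k,\lambda}^{-1}$ is bounded from $L^2$ into $H^1_\star$.

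\emph{Compactness.} The interval $(-h,0)$ is bounded, so Rellich's theorem makes the embedding $H^1(-h,0)\hookrightarrow L^2(-h,0)$ compact. Composing it with the bounded map above gives compactness of $Y_{k,\lambda}^{-1}$ on $L^2$. One could alternatively use the $H^2$ regularity from Proposition \ref{PropInverseOfR}, but the $H^1$ bound already suffices.

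\emph{Self-adjointness.} Take $f,g\in L^2$ and put $\vartheta=Y_{k,\lambda}^{-1}f$, $\varrho=Y_{k,\lambda}^{-1}g$, both in $H^1_\star$. Using \eqref{EqY=f} for $g$ tested with $\vartheta$, and then for $f$ tested with $\varrho$, we get
\[
(Y_{k,\lambda}^{-1}f,g)_{L^2}=\int_{-h}^0 \vartheta g\,dx=\langle Y_{k,\lambda}\varrho,\vartheta\rangle=\bB_{k,\lambda}(\varrho,\vartheta)=\bB_{k,\lambda}(\vartheta,\varrho)=\langle Y_{k,\lambda}\vartheta,\varrho\rangle=\int_{-h}^0 f\varrho\,dx=(f,Y_{k,\lambda}^{-1}g)_{L^2}.
\]
The middle equality is the symmetry of $\bB_{k,\lambda}$, which holds because $\vartheta,\varrho,\rho_0$ are all real. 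A bounded symmetric operator on $L^2$ is self-adjoint, which completes the argument.

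The only delicate point is bookkeeping rather than analysis. One must make sure that the pairing $\langle\cdot,\cdot\rangle$ in \eqref{EqMathcalB_a}, restricted to $f\in L^2$, really is the $L^2$ integral, i.e. that the identification of $L^2$ inside $(H^1_\star)'$ is via $J$. One must also use the real $L^2$ inner product, as permitted by Lemma \ref{Lemcharacteristic valueReal}. With these conventions fixed, both properties follow directly.
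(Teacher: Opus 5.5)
Your proof is correct. It follows a slightly different and more economical route than the paper.

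The paper gets compactness from the $H^2$ regularity of Proposition \ref{PropInverseOfR}(2) together with the embedding $H^2(-h,0)\hookrightarrow L^2(-h,0)$, following \cite[Proposition 3.4]{LN22}. You stop earlier. You use the energy estimate $\|Y_{k,\lambda}^{-1}f\|_{H^1}\le (\rho_-\min(k^2,1))^{-1}\|f\|_{L^2}$, which comes directly from coercivity of $\bB_{k,\lambda}$, and then apply Rellich's theorem to $H^1\hookrightarrow L^2$.

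Your version needs only $\rho_0\ge\rho_->0$ and $\lambda>0$. It uses neither the bootstrap regularity argument nor any control of $\rho_0'$. It also makes clear that the embedding must be compact; the paper's text mentions only a continuous injection, which by itself would not suffice.

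The paper's route in fact needs a quantitative bound $\|\vartheta\|_{H^2}\le C\|f\|_{L^2}$. That bound is normally obtained from exactly your $H^1$ estimate plus the equation $\rho_0\vartheta''=k^2\rho_0\vartheta-\rho_0'\vartheta'-f$, which requires some regularity of $\rho_0$. What the $H^2$ route buys is information used immediately afterwards: the range of $Y_{k,\lambda}^{-1}$ consists of $H^2$ functions satisfying \eqref{ODEboundary}, so the eigenfunctions $\phi_n$ are genuine solutions.

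For self-adjointness, both arguments rest on the symmetry of $\bB_{k,\lambda}$. The paper leaves this implicit, and you verify it explicitly. What actually makes the form symmetric, with a nonnegative boundary term, is that $\lambda$ is real. With complex $L^2$ and the sesquilinear form, the same computation goes through.
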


We continue studying the operator $S_{k,\lambda} := \sqrt{\rho'_0} Y_{k,\lambda}^{-1}\sqrt{\rho'_0}$. Owing to Proposition \ref{RemNormR}, we obtain the following. 
\begin{proposition}\label{PropOpeS}
The operator $S_{k,\lambda} : L^2(-h,0) \to L^2(-h,0)$ is compact and self-adjoint.
\end{proposition}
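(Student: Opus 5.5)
The plan is to write $S_{k,\lambda}$ as a composition of a compact self-adjoint operator with bounded multiplication operators. Since $\rho_0$ is strictly increasing, $\rho_0'\geq 0$. I will assume, as is implicit in the setting of \cite{LN22}, that $\rho_0'$ is bounded on $[-h,0]$, for instance $\rho_0\in C^1([-h,0])$. This is consistent with Lemma \ref{Lemcharacteristic valueReal}, which requires $L_0^{-1}=\max \rho_0'/\rho_0<\infty$. Under this assumption $\sqrt{\rho_0'}\in L^\infty(-h,0)$ is real-valued and nonnegative. Let $M$ denote multiplication by $\sqrt{\rho_0'}$. Then $M$ is a bounded operator on $L^2(-h,0)$ with norm at most $\|\sqrt{\rho_0'}\|_{L^\infty}$. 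It is also self-adjoint, because for all $f,g\in L^2(-h,0)$,
\[
\int_{-h}^0 \sqrt{\rho_0'}\, f\, g\, dx = \int_{-h}^0 f \,\sqrt{\rho_0'}\, g\, dx .
\]

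The key step is the factorization $S_{k,\lambda}=M\,Y_{k,\lambda}^{-1}\,M$ as an operator on $L^2(-h,0)$. I will check that it is well defined. For $f\in L^2(-h,0)$ we have $Mf\in L^2(-h,0)$. Proposition \ref{PropInverseOfR}(2) then gives a unique $Y_{k,\lambda}^{-1}(Mf)\in H_\star^2(-h,0)\subset L^2(-h,0)$. Multiplying once more by $\sqrt{\rho_0'}$ keeps us in $L^2(-h,0)$.

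\emph{Compactness.} By Proposition \ref{RemNormR}, $Y_{k,\lambda}^{-1}$ is compact on $L^2(-h,0)$. The compact operators form a two-sided ideal in the bounded operators, so $M\,Y_{k,\lambda}^{-1}\,M$ is compact. Concretely, take a bounded sequence $(f_n)$. Then $(Mf_n)$ is bounded, so some subsequence of $(Y_{k,\lambda}^{-1}Mf_n)$ converges in $L^2$. Applying the bounded operator $M$ to that subsequence preserves convergence.

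\emph{Self-adjointness.} Using that $M$ is self-adjoint (twice) and that $Y_{k,\lambda}^{-1}$ is self-adjoint, for $f,g\in L^2(-h,0)$ we get
\[
(S_{k,\lambda}f,g)_{L^2}=(Y_{k,\lambda}^{-1}Mf, Mg)_{L^2}=(Mf, Y_{k,\lambda}^{-1}Mg)_{L^2}=(f,S_{k,\lambda}g)_{L^2}.
\]
The self-adjointness of $Y_{k,\lambda}^{-1}$ comes from Proposition \ref{RemNormR}, and ultimately from the symmetry of $\bB_{k,\lambda}$: with $\vartheta=Y_{k,\lambda}^{-1}u$ and $\varrho=Y_{k,\lambda}^{-1}w$ one has $(u,\varrho)=\bB_{k,\lambda}(\vartheta,\varrho)=\bB_{k,\lambda}(\varrho,\vartheta)=(\vartheta,w)$.

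The only genuine obstacle is the regularity hypothesis that makes $M$ bounded on $L^2(-h,0)$. If $\rho_0'$ were merely $L^1$, I would instead use that $Y_{k,\lambda}^{-1}$ maps $L^2$ into $H^2\subset L^\infty$ and argue with weights, but I expect the bounded-$\rho_0'$ setting intended in \cite{LN22} to suffice. Everything else is a routine combination of Proposition \ref{RemNormR} with the ideal property of compact operators.
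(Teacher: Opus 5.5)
Your proposal is correct and follows the same route as the paper. The paper deduces the statement directly from Proposition \ref{RemNormR} by viewing $S_{k,\lambda}$ as $Y_{k,\lambda}^{-1}$ sandwiched between two bounded, self-adjoint multiplication operators by $\sqrt{\rho_0'}$; your explicit assumption that $\rho_0'$ is bounded is the one the paper makes implicitly (e.g.\ through $L_0^{-1}=\max\rho_0'/\rho_0$), and your ideal-property and adjoint computations supply exactly the details the paper leaves out.
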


As a result of the spectral theory of compact and self-adjoint operators, the point spectrum of $S_{k,\lambda}$ is discrete, i.e. is a  sequence $\{\gamma_n(k,\lambda)\}_{n\geqslant 1}$ of  characteristic values of $S_{k,\lambda}$, associated with normalized orthogonal eigenfunctions $\{\varpi_n\}_{n\geqslant 1}$ in $L^2(-h,0)$. That means 
\[
\gamma_n(k,\lambda)\varpi_n =S_{k,\lambda}\varpi_n= \sqrt{\rho_0'}Y_{k,\lambda}^{-1}\sqrt{\rho_0'} \varpi_n.
\]
So that with $\phi_n = Y_{k,\lambda}^{-1}\sqrt{\rho_0'} \varpi_n \in H^2(-h,0)$, one has
\begin{equation}\label{EqRf_n}
\gamma_n(\lambda, k) Y_{k,\lambda}\phi_n =  \rho_0' \phi_n
\end{equation}
and $\phi_n$ satisfies \eqref{ODEboundary}. Eq. \eqref{EqRf_n} also tells us that $\gamma_n(k,\lambda) >0$ for all $n$. Indeed, we obtain 
\[
\gamma_n(k,\lambda) \int_{-h}^0 (Y_{k,\lambda}\phi_n)  \phi_n dx = \int_{-h}^0\rho_0'\phi_n^2 dx.
\]
That implies
\begin{equation}\label{EqPhi_nB}
\gamma_n(k,\lambda) \bB_{k,\lambda}(\phi_n,\phi_n) = \int_{-h}^0 \rho_0' \phi_n^2 dx.
\end{equation}
Since $\bB_{k,\lambda}(\phi_n,\phi_n) >0$ and $\rho_0' >0$ on $(-h,0)$, we know that $\gamma_n(k,\lambda)$ is positive for all $n$. Hence, by reordering and using the spectral theory of compact and self-adjoint operators again, we obtain that $\{\gamma_n(k,\lambda)\}_{n\geq 1}$ is a positive sequence decreasing towards 0 as $n\to \infty$.

\subsection{Proof of Theorem \ref{ThmGeneral}}\label{sec:ThmGeneral}

For each $n$,  in order to verify that $\phi_n$ is a solution of \eqref{2ndOrderEqPhi}-\eqref{ODEboundary}, we are left to look for real values of $\lambda_n$  such that 
\begin{equation}\label{EqDefineLambda}
\gamma_n(k,\lambda_n)=\frac{\lambda_n^2}{gk^2}.
\end{equation}
To solve \eqref{EqDefineLambda}, we need the following  three lemmas.
\begin{lemma}\label{LemGammaCont}
For each $n$, $\gamma_n(k,\lambda)$ and $\phi_n$ are differentiable in $\lambda$.
\end{lemma}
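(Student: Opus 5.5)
The plan is to view $S_{k,\lambda}$ as a family of compact self-adjoint operators on $L^2(-h,0)$ that depends analytically (in particular differentiably) on the parameter $\lambda\in(0,\infty)$, and then apply the perturbation theory of eigenvalues of such families.

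The starting point is that the $\lambda$-dependence of $\bB_{k,\lambda}$ sits entirely in the boundary term $\frac{gk^2\rho_+}{\lambda^2}\vartheta(0)\varrho(0)$. We can therefore write
\[
\bB_{k,\lambda}(\vartheta,\varrho)=\bB_{k,\infty}(\vartheta,\varrho)+\frac{gk^2\rho_+}{\lambda^2}\,\vartheta(0)\varrho(0),
\]
where $\bB_{k,\infty}$ is the $\lambda$-independent coercive form on $H_\star^1(-h,0)$. The perturbation is a rank-one form. It is bounded on $H_\star^1$ by the trace inequality $|\vartheta(0)|\le C\|\vartheta\|_{H^1}$, and its coefficient $\mu=\lambda^{-2}$ is real-analytic in $\lambda>0$.

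From this, $Y_{k,\lambda}=Y_{k,\infty}+\mu\, T$, where $\langle T\vartheta,\varrho\rangle=gk^2\rho_+\vartheta(0)\varrho(0)$. Hence
\[
Y_{k,\lambda}^{-1}=\big(I+\mu Y_{k,\infty}^{-1}T\big)^{-1}Y_{k,\infty}^{-1}.
\]
This expression is analytic in $\mu$ as an $\mathcal L\big((H^1_\star)',H^1_\star\big)$-valued function: the inverse exists for every $\mu\ge0$ by Proposition \ref{PropPropertyR}, and inversion is analytic on the open set of invertible operators. The same conclusion also follows by differentiating the identity $Y_{k,\lambda}Y_{k,\lambda}^{-1}=I$, which gives $\partial_\lambda Y_{k,\lambda}^{-1}=\frac{2}{\lambda^3}Y_{k,\lambda}^{-1}TY_{k,\lambda}^{-1}$. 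Alternatively, the Sherman--Morrison formula yields an explicit rank-one correction whose coefficients are rational in $\mu$ with a nonvanishing denominator $1+\mu\, gk^2\rho_+ (Y_{k,\infty}^{-1}\delta_0)(0)>0$. Composing with the bounded multiplications by $\sqrt{\rho_0'}$ and with the embedding $H^1_\star\hookrightarrow L^2$, we obtain that $\lambda\mapsto S_{k,\lambda}\in\mathcal L(L^2)$ is a real-analytic family of compact self-adjoint operators.

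Next, we invoke Kato's analytic perturbation theory (Rellich's theorem) for self-adjoint holomorphic families. The eigenvalues and normalized eigenfunctions can be chosen analytic in $\lambda$ on $(0,\infty)$, since isolated nonzero eigenvalues of compact operators have finite multiplicity. This gives differentiability of $\gamma_n$ and $\varpi_n$. Then $\phi_n=Y_{k,\lambda}^{-1}\sqrt{\rho_0'}\varpi_n$ is differentiable as well, being a product of differentiable maps.

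We expect the main obstacle to be the labeling. Rellich's analytic branches need not coincide with the decreasingly ordered $\gamma_n$ when eigenvalue curves cross, and the ordered eigenvalues are in general only Lipschitz continuous. We would handle this in one of two ways. The first is to show the eigenvalues are simple: for this Sturm--Liouville-type problem, with a Dirichlet condition at $-h$ and a Robin condition at $0$, two independent eigenfunctions in one eigenspace would have a vanishing Wronskian at $-h$, hence be proportional. Simplicity together with Kato's theory makes each ordered $\gamma_n$ analytic. The second option is to weaken the claim to analytic branches, which suffices for solving \eqref{EqDefineLambda}. A complementary tool is the Hellmann--Feynman formula obtained by differentiating \eqref{EqPhi_nB}, which shows $\gamma_n$ is increasing in $\lambda$:
\[
\partial_\lambda\gamma_n=\gamma_n^2\,\frac{2gk^2\rho_+}{\lambda^3}\,\frac{\phi_n(0)^2}{\int\rho_0'\phi_n^2}.
\]
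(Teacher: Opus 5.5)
The paper does not actually prove this lemma. It only says the argument is the same as \cite[Lemma 3.2]{LN22} and omits it, so your proposal is a self-contained replacement for a citation rather than a variant of a written argument. Your argument is correct.

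The perturbative core checks out. The only $\lambda$-dependence in $\bB_{k,\lambda}$ is a rank-one boundary term with coefficient $\lambda^{-2}$, so $\lambda\mapsto Y_{k,\lambda}^{-1}$ and $\lambda\mapsto S_{k,\lambda}$ are real-analytic in operator norm, and analytic perturbation theory then applies to the isolated eigenvalues $\gamma_n(k,\lambda)>0$. Your formula $\partial_\lambda Y_{k,\lambda}^{-1}=\frac{2}{\lambda^3}Y_{k,\lambda}^{-1}TY_{k,\lambda}^{-1}$ is right. So is the positivity of the Sherman--Morrison denominator, which equals $1+\mu gk^2\rho_+\bB_{k,\infty}(w,w)$ with $w=Y_{k,\infty}^{-1}\delta_0$.

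The real value of your version is that it handles the labeling issue, which the paper never mentions even though the lemma concerns the \emph{ordered} $\gamma_n$. You correctly note that ordered eigenvalues of an analytic self-adjoint family are in general only Lipschitz at crossings, and your Wronskian argument rules crossings out here. An eigenfunction $\varpi$ of $S_{k,\lambda}$ with eigenvalue $\gamma\neq0$ is recovered as $\varpi=\gamma^{-1}\sqrt{\rho_0'}\,\phi$ from $\phi=Y_{k,\lambda}^{-1}\sqrt{\rho_0'}\varpi$, so $\varpi\mapsto\phi$ is injective. The ODE solutions with $\phi(-h)=0$ form a one-dimensional space, so every nonzero eigenvalue is simple. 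Combined with the min--max bound $|\gamma_n(k,\lambda)-\gamma_n(k,\lambda_0)|\le\|S_{k,\lambda}-S_{k,\lambda_0}\|$, this identifies each ordered $\gamma_n$ locally with an analytic branch. The same simplicity is exactly what justifies the strict inequality $\gamma_m(k,\lambda_{m+1})>\gamma_{m+1}(k,\lambda_{m+1})$ used later in the proof of Theorem \ref{ThmGeneral}. Your Hellmann--Feynman identity is precisely the case $\alpha=0$ of the computation in Lemma \ref{LemGammaDecrease}, and it is strictly positive because $\phi_n(0)=0$ together with the Robin condition would force $\phi_n\equiv0$.

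Two points are worth making explicit.
\begin{itemize}
\item The later use of this lemma takes $z_n'(0)$ and integrates $Y_{k,\lambda}z_n$ by parts, so one wants $\phi_n$ differentiable in $H^2$, not only in $H^1_\star$. Your Sherman--Morrison representation gives this at once. $Y_{k,\infty}^{-1}$ maps $L^2$ into $H^2_\star$, and $w\in H^2$ solves $k^2\rho_0w-(\rho_0w')'=0$ with $w(-h)=0$ and $\rho_+w'(0)=1$, so $\lambda\mapsto Y_{k,\lambda}^{-1}$ is analytic into $\mathcal L(L^2,H^2)$.
\item Boundedness of multiplication by $\sqrt{\rho_0'}$ on $L^2$ uses $\rho_0'\in L^\infty$, which the paper also assumes tacitly.
\end{itemize}
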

The proof of Lemma \ref{LemGammaCont} is the same as \cite[Lemma 3.2]{LN22}, we omit the details here.
\begin{lemma}\label{LemMax}
There holds
\begin{equation}\label{quotient}
\max_{\phi \in H_\star^1(-h,0)} \frac{k^2 \phi^2(0)}{\int_{-h}^0 (k^2\phi^2 +(\phi')^2)dx}=\frac{\tanh(kh)}k.
\end{equation}
\end{lemma}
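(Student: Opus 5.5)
We will prove that the maximum in \eqref{quotient} is attained at an explicit function and equals $\tanh(kh)/k$. The quotient is homogeneous of degree zero, so it suffices to find the best constant $C$ in the trace inequality
\[
k^2\phi(0)^2 \le C\int_{-h}^0 (k^2\phi^2+(\phi')^2)\,dx, \qquad \phi\in H_\star^1(-h,0),
\]
and to exhibit an extremal function.

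\textbf{Candidate maximizer.} Define
\[
\phi_\ast(x)=\sinh(k(x+h)).
\]
It lies in $H_\star^1(-h,0)$ and solves $-\phi''+k^2\phi=0$ with $\phi(-h)=0$. These are the Euler--Lagrange equations for maximizing the quotient, whose natural boundary condition at $x=0$ is $\phi'(0)=\mu k^2\phi(0)$, with $\mu$ the maximal value.

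\textbf{Evaluating the quotient at $\phi_\ast$.} Integrate by parts and use $\phi_\ast''=k^2\phi_\ast$ together with $\phi_\ast(-h)=0$:
\[
\int_{-h}^0 (k^2\phi_\ast^2+(\phi_\ast')^2)\,dx=\phi_\ast(0)\phi_\ast'(0)=k\sinh(kh)\cosh(kh).
\]
The quotient at $\phi_\ast$ is therefore
\[
\frac{k^2\sinh^2(kh)}{k\sinh(kh)\cosh(kh)}=\frac{\tanh(kh)}{k}.
\]
So the supremum is at least $\tanh(kh)/k$.

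\textbf{Upper bound for arbitrary $\phi$.} Take any $\phi\in H_\star^1(-h,0)$ and split it as $\phi=c\,\phi_\ast+w$, where $c=\phi(0)/\phi_\ast(0)$. Then $w\in H^1$ with $w(-h)=w(0)=0$. Write
\[
E(\phi)=\int_{-h}^0 (k^2\phi^2+(\phi')^2)\,dx .
\]
The cross term in $E(\phi)$ vanishes. Indeed,
\[
\int_{-h}^0 (k^2\phi_\ast w+\phi_\ast' w')\,dx=\bigl[\phi_\ast' w\bigr]_{-h}^0+\int_{-h}^0(k^2\phi_\ast-\phi_\ast'')w\,dx=0,
\]
because $w$ vanishes at both endpoints and $\phi_\ast''=k^2\phi_\ast$. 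Hence $E(\phi)=c^2E(\phi_\ast)+E(w)\ge c^2E(\phi_\ast)$. Since $\phi(0)=c\,\phi_\ast(0)$, this gives
\[
\frac{k^2\phi(0)^2}{E(\phi)}\le \frac{k^2c^2\phi_\ast(0)^2}{c^2E(\phi_\ast)}=\frac{\tanh(kh)}{k}.
\]
This also covers $\phi(0)=0$, where both sides are consistent since the quotient is then zero. Equality holds exactly for multiples of $\phi_\ast$, so the supremum is attained and is a maximum.

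\textbf{Main obstacle.} The only delicate point is justifying the integrations by parts for $H^1$ functions. They are valid because $\phi_\ast$ is smooth and $w\in H^1$ has well-defined vanishing traces at both endpoints.
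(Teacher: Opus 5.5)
Your decomposition argument is sound. It is a cleaner route than the paper's Lagrange-multiplier proof, which derives only necessary conditions for an extremum and tacitly assumes a maximizer exists. The splitting $\phi=c\,\phi_\ast+w$ with $w(-h)=w(0)=0$ and the vanishing cross term proves the bound for every $\phi$ and identifies the maximizers directly.

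The genuine problem is the one line that produces the value:
\[
\frac{k^2\sinh^2(kh)}{k\sinh(kh)\cosh(kh)}=\frac{k\sinh(kh)}{\cosh(kh)}=k\tanh(kh),
\]
not $\tanh(kh)/k$. So, with the arithmetic fixed, your proof shows that the maximum equals $k\tanh(kh)$. The identity you were asked to prove is false unless $k=1$.

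You can see this without computing anything. The quotient has the dimension of $k$ (an inverse length), whereas $\tanh(kh)/k$ is a length. Concretely, take $k=2$, $h=1$ and $\phi(x)=x+1$. The quotient is $4/(7/3)=12/7$, far above $\tanh(2)/2\approx 0.48$, and below $2\tanh(2)$ as it must be. The correct statement is
\[
\max_{\phi\in H^1_\star(-h,0)}\frac{\phi^2(0)}{\int_{-h}^0\bigl(k^2\phi^2+(\phi')^2\bigr)\,dx}=\frac{\tanh(kh)}{k},
\]
equivalently, the quotient with $k^2\phi^2(0)$ in the numerator has maximum $k\tanh(kh)$.

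The paper's own proof makes the same slip in its last line: $k^2\sinh(kh)=\beta_\star k\cosh(kh)$ actually gives $\beta_\star=k\tanh(kh)$. Your heuristic natural boundary condition is also misstated. The Euler--Lagrange equation for the quotient gives $\mu\,\phi'(0)=k^2\phi(0)$, and for $\phi_\ast$ this again yields $\mu=k\tanh(kh)$.

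Rather than concluding the stated formula, you should flag the discrepancy and prove the corrected constant. Your argument does this rigorously once the arithmetic is fixed, including attainment exactly on multiples of $\phi_\ast$.
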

\begin{proof}
Let us consider the Lagrangian functional 
\[
L(\phi,\beta)= k^2 \phi^2(0) -\beta \Big(\int_{-h}^0 (k^2\phi^2 +(\phi')^2)dx-1\Big).
\]
for any $\phi \in H_\star^1(-h,0)$. Using the Lagrange multiplier theorem, we find that the extrema of the quotient \eqref{quotient} are necessarily the stationary points  $(\phi_\star,\beta_\star)$ of $L$, which satisfy  
\[
\int_{-h}^0  (k^2\phi_\star^2 +(\phi_\star')^2)dx=1
\]
and
\[
k^2\phi_\star(0)\theta(0)= \beta_\star \int_{-h}^0 (k^2\phi_\star \theta+\phi_\star'\theta')dx,
\]
for any $\theta\in H_\star^1(-h,0)$. Using integration by parts, we obtain  that 
\begin{equation}\label{1Eq_LemMax}
k^2\phi_\star(0)\theta(0) -\beta_\star \phi_\star'(0)\theta(0) = \beta_\star \int_{-h}^0 (k^2\phi_\star-\phi_\star'') \theta dx.
\end{equation}
Restricting $\theta \in C_0^{\infty}(-h,0)$, the resulting equality yields $\phi_\star''-k^2\phi_\star =0$ on  $(-h,0)$.
That implies  $\phi_\star(x)= A \sinh(k(x+h))$ for some constant $A$ satisfying that 
\[
A^2 \int_{-h}^0 k^2 (\sinh^2(k(x+h))+\cosh^2(k(x+h)))dx=1.
\]
Inserting that $\phi_\star$ into \eqref{1Eq_LemMax}, we have
\[
A (k^2 \sinh(kh)- \beta_\star k \cosh(kh))=0 \rightarrow \beta_\star =  \frac{\tanh(kh)}k.
\]
This completes the proof of Lemma \ref{LemMax}.
\end{proof}
\begin{lemma}\label{LemGammaDecrease}
For each $n$, the following holds:
\begin{itemize}
    \item[1.] $\gamma_n(k,\lambda)$ is strictly increasing as $\lambda>0$,
    \item[2.] $\frac{\lambda^\alpha}{\gamma_n(k,\lambda)}$ is strictly increasing in the regime $\lambda >\epsilon>0$, with 
    \[
    \alpha \geq 2>\frac{2g\rho_+ k}{g\rho_+ k +  \rho_- \epsilon^2\tanh(kh)}.
    \]
\end{itemize}
\end{lemma}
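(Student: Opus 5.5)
The plan is to differentiate the eigenvalue relation \eqref{EqRf_n} in $\lambda$, which Lemma \ref{LemGammaCont} allows. Write $\gamma_n=\gamma_n(k,\lambda)$ and $\dot{}=\partial_\lambda$, and normalize $\phi_n$ by $\int_{-h}^0\rho_0'\phi_n^2dx=1$. The weak form of \eqref{EqRf_n} reads
\[
\gamma_n \bB_{k,\lambda}(\phi_n,\theta)=\int_{-h}^0\rho_0'\phi_n\theta\,dx\qquad\text{for all }\theta\in H^1_\star(-h,0).
\]
Differentiating in $\lambda$ and testing with $\theta=\phi_n$ gives
\[
\dot\gamma_n \bB_{k,\lambda}(\phi_n,\phi_n)+\gamma_n\Big(\partial_\lambda\bB_{k,\lambda}\Big)(\phi_n,\phi_n)+\gamma_n\bB_{k,\lambda}(\dot\phi_n,\phi_n)=\int_{-h}^0\rho_0'\dot\phi_n\phi_n\,dx .
\]
By symmetry of $\bB_{k,\lambda}$ and the weak form tested with $\theta=\dot\phi_n$, the terms containing $\dot\phi_n$ cancel (Hellmann--Feynman). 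Since $\partial_\lambda\bB_{k,\lambda}(\phi,\phi)=-\frac{2gk^2\rho_+}{\lambda^3}\phi(0)^2$, and using \eqref{EqPhi_nB}, i.e. $\bB_{k,\lambda}(\phi_n,\phi_n)=1/\gamma_n$, we get
\[
\dot\gamma_n=\frac{2gk^2\rho_+}{\lambda^3}\,\gamma_n^2\,\phi_n(0)^2 .
\]
This already gives $\dot\gamma_n\ge0$.

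For strictness in item 1, I need $\phi_n(0)\neq0$. If $\phi_n(0)=0$, then the Robin condition in \eqref{ODEboundary} gives $\phi_n'(0)=0$. Together with the second-order ODE $\gamma_n Y_{k,\lambda}\phi_n=\rho_0'\phi_n$, which is linear and regular since $\rho_0>0$, Cauchy uniqueness would force $\phi_n\equiv0$, a contradiction. This step needs a little care if $\rho_0$ is only $H^1$ or $C^1$, but the ODE theory for $(\rho_0\phi')'=\dots$ as a first-order system in $(\phi,\rho_0\phi')$ handles it. This proves item 1.

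For item 2, compute
\[
\partial_\lambda\big(\lambda^\alpha/\gamma_n\big)=\frac{\lambda^{\alpha-1}}{\gamma_n^2}\big(\alpha\gamma_n-\lambda\dot\gamma_n\big),
\]
so it suffices to show $\lambda\dot\gamma_n<\alpha\gamma_n$. From the formula above,
\[
\frac{\lambda\dot\gamma_n}{\gamma_n}=\frac{2gk^2\rho_+}{\lambda^2}\gamma_n\phi_n(0)^2=\frac{2X}{X+Z},\qquad X:=\frac{gk^2\rho_+}{\lambda^2}\phi_n(0)^2,\quad Z:=\int_{-h}^0\rho_0\big(k^2\phi_n^2+(\phi_n')^2\big)dx,
\]
where I used $\gamma_n=1/(X+Z)$. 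The map $X\mapsto 2X/(X+Z)$ is increasing, so I need an upper bound on $X/Z$. Since $\rho_0\ge\rho_-$, Lemma \ref{LemMax} gives
\[
Z\ge \rho_-\int_{-h}^0(k^2\phi_n^2+(\phi_n')^2)\,dx\ge \rho_-\frac{k}{\tanh(kh)}\,\phi_n(0)^2 .
\]
Hence
\[
\frac{X}{Z}\le \frac{g k\rho_+\tanh(kh)}{\rho_-\lambda^2}\cdot\frac1{\tanh^2(kh)}\ \text{(up to arranging factors)},
\]
and carrying out the algebra gives
\[
\frac{2X}{X+Z}\le \frac{2g\rho_+k}{g\rho_+k+\rho_-\lambda^2\tanh(kh)\cdot(\cdot)} .
\]
I expect the exact constant to reproduce the stated threshold $\frac{2g\rho_+ k}{g\rho_+ k+\rho_-\epsilon^2\tanh(kh)}$ after using $\lambda>\epsilon$, possibly with $\tanh$ placed as in the statement once the normalization $k^2\phi(0)^2$ of Lemma \ref{LemMax} is tracked carefully.

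Since this quantity is strictly less than $2\le\alpha$, we get $\lambda\dot\gamma_n<\alpha\gamma_n$, which is the claim. The main obstacle is bookkeeping in this last inequality, namely matching constants with Lemma \ref{LemMax}. The other delicate point is the justification of the Hellmann--Feynman cancellation when eigenvalues are multiple. For Sturm--Liouville problems of this type the eigenvalues should be simple, and I would remark on this, or else choose differentiable branches as provided by Lemma \ref{LemGammaCont}.
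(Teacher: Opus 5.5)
Your proposal is correct and follows essentially the paper's argument. The paper differentiates the strong form of \eqref{EqRf_n} and integrates by parts using the boundary condition for $z_n=\partial_\lambda\phi_n$, whereas you differentiate the weak form. Both reduce to the same identity $\frac{d}{d\lambda}\big(1/\gamma_n\big)\int_{-h}^0\rho_0'\phi_n^2\,dx=-\frac{2gk^2\rho_+}{\lambda^3}\phi_n(0)^2$. Your Cauchy-uniqueness argument for $\phi_n(0)\neq0$ supplies the strictness that the paper leaves implicit.

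For item 2 with $\alpha\geq 2$, the constant-matching you call the main obstacle is unnecessary, since $\frac{2X}{X+Z}<2$ simply because $Z>0$. If you want the wider range $\alpha>\frac{2g\rho_+k}{g\rho_+k+\rho_-\epsilon^2\tanh(kh)}$, which the paper's estimate covers and which the proof of Theorem \ref{ThmGeneral} uses with $\alpha<2$, proceed as follows. Your bound $Z\geq\rho_-\frac{k}{\tanh(kh)}\phi_n(0)^2$ is the correct trace inequality; the maximum in Lemma \ref{LemMax} as printed should read $k\tanh(kh)$. It gives $\frac{X}{Z}\leq\frac{g\rho_+k\tanh(kh)}{\rho_-\lambda^2}$, not your $\tanh^{-2}$ expression. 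Hence $\frac{2X}{X+Z}\leq\frac{2g\rho_+k\tanh(kh)}{g\rho_+k\tanh(kh)+\rho_-\epsilon^2}$ for $\lambda>\epsilon$, which lies below the paper's threshold.
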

\begin{proof}
Let $z_n= \frac{d\phi_n}{d\lambda}$, it follows from  \eqref{EqRf_n} that 
\begin{equation}\label{1stEqDeriTz_n}
\lambda^\alpha Y_{k,\lambda}z_n +\alpha \lambda^{\alpha-1} Y_{k,\lambda}\phi_n = \frac{\lambda^\alpha}{\gamma_n(k,\lambda)} \rho_0'z_n+ \frac{d}{d\lambda}\Big( \frac{\lambda^\alpha}{\gamma_n(k,\lambda)}\Big)\rho_0'\phi_n
\end{equation}
in $(-h,0)$. Note that 
\begin{equation}\label{ZnAtMinusA}
z_n(-h)=0, \quad \lambda^2\rho_+ z_n'(0)+ gk^2\rho_+ z_n(0) -\frac{2gk^2\rho_+}{\lambda} \phi_n(0)=0.
\end{equation}
Hence, multiplying by $\phi_n$ on both sides of \eqref{1stEqDeriTz_n}, we obtain that 
\begin{equation}\label{2ndEqDeriTz_n}
\begin{split}
&\lambda^\alpha \int_{-h}^0 (Y_{k,\lambda}z_n) \phi_n dx+ \alpha \lambda^{\alpha-1} \int_{-h}^0 (Y_{k,\lambda}\phi_n)\phi_n dx\\
&\qquad = \frac{\lambda^\alpha}{\gamma_n(k,\lambda)} \int_{-h}^0 \rho_0'z_n \phi_n dx+ \frac{d}{d\lambda}\Big( \frac{\lambda^\alpha}{\gamma_n(k,\lambda)}\Big) \int_{-h}^0\rho_0' \phi_n^2 dx.
\end{split}
\end{equation}
Using the integration by parts and also \eqref{ZnAtMinusA}, we have 
\begin{equation}\label{2ndEqDeriTz_nTz_n}
\begin{split}
\int_{-h}^0 (Y_{k,\lambda}z_n) \phi_n dx&=  \int_{-h}^0 (Y_{k,\lambda} \phi_n)z_n dx -\rho_0 z_n' \phi_n \Big|_{-h}^0 + \rho_0 \phi_n'z_n\Big|_{-h}^0\\
&= \int_{-h}^0 (Y_{k,\lambda} \phi_n)z_n dx -\frac2{\lambda^3} gk^2 \rho_+ (\phi_n(0))^2.
\end{split}
\end{equation}
We deduce from \eqref{2ndEqDeriTz_n} that
\[
\begin{split}
\frac{d}{d\lambda}& \Big(\frac{\lambda^\alpha}{\gamma_n(k,\lambda)}\Big) \int_{-h}^0\rho_0'\phi_n^2dx \\
&= -\frac2{\lambda^{3-\alpha}} gk^2 \rho_+ (\phi_n(0))^2 + \alpha \lambda^{\alpha-1} \bB_{k,\lambda}(\phi_n,\phi_n) \\
&= \lambda^{\alpha-1} \Big( - \frac{(2-\alpha)}{\lambda^2} gk^2 \rho_+ (\phi_n(0))^2 + \alpha \int_{-h}^0 \rho_0(k^2\phi_n^2+ (\phi_n')^2)dx \Big).
\end{split}
\]

The first assertion is proven by letting $\alpha=0$.
To prove the second assertion, we estimate that
\[\begin{split}
&- \frac{(2-\alpha)}{\lambda^2} gk^2 \rho_+ (\phi_n(0))^2+ \alpha \int_{-h}^0 \rho_0(k^2\phi_n^2+ (\phi_n')^2)dx  \\
&\qquad\qquad \geq \Big(  \frac{\alpha \rho_-\tanh(kh)}k  - \frac{(2-\alpha)g\rho_+}{\epsilon^2} \Big) k^2(\phi_n(0))^2>0,
\end{split}\]
if 
\[
 \frac{\alpha \rho_-\tanh(kh)}k   - \frac{(2-\alpha)}{\epsilon^2} g\rho_+ >0 \quad\Leftrightarrow \quad 
\alpha > \frac{2 g\rho_+ k}{g\rho_+ k + \rho_- \epsilon^2 \tanh(kh) }.
\]
This ends the proof of Lemma \ref{LemGammaDecrease}.
\end{proof}
Now we are in the situation to solve \eqref{EqDefineLambda} and  to complete the proof of Theorem \ref{ThmGeneral}.
\begin{proof}[Proof of Theorem \ref{ThmGeneral}]
Using \eqref{EqPhi_nB}, we know that
\[
\frac1{\gamma_n(k,\lambda)} \int_{-h}^0 \rho_0' \phi_n^2 dx = \int_{-h}^0 (Y_{k,\lambda}\phi_n)\phi_n dx = \bB_{k,\lambda}(\phi_n,\phi_n), 
\]
that implies, for all $n\geqslant 1$,
\begin{equation}\label{LimitGammaRight}
\lim_{\lambda \to \sqrt{\frac{g}{L_0}}} \frac{\lambda^2}{\gamma_n(k,\lambda)} > gk^2.
\end{equation}
Next, we prove that 
\begin{equation}\label{LimitGammaLeft}
\lim_{\lambda \to 0} \frac{\lambda}{\gamma_n(k,\lambda)} =0.
\end{equation}
For $0<\lambda \leq \epsilon$, due to Lemma \ref{LemGammaDecrease}(2), we have that  
\[
\frac{\lambda^\alpha}{\gamma_n(k,\lambda)} \leq \frac{\epsilon^\alpha}{\gamma_n(k,\epsilon)} \quad\text{with } \alpha = \frac{2 g\rho_+ k}{g\rho_+ k + \rho_- \lambda^2 \tanh(kh) } <2.
\]
That implies
\[
\frac{\lambda^2}{\gamma_n(k,\lambda)}= \lambda^{2-\alpha} \frac{\lambda^\alpha}{\gamma_n(\lambda,k)} \leq \lambda^{2-\alpha} \frac{\epsilon^\alpha}{\gamma_n(k,\epsilon)} \leq \lambda^{2-\alpha} \frac{\epsilon^2}{\gamma_n(k,\epsilon)} \to 0 \quad\text{as }\lambda \to 0,
\]
yielding \eqref{LimitGammaLeft}. Combining the two limits \eqref{LimitGammaRight} and \eqref{LimitGammaLeft} with Lemma \ref{LemGammaDecrease}(1), we deduce that, for each $n\geq 1$, there is a unique $\lambda_n$ solving \eqref{EqDefineLambda}. Since $\lambda_n$ is a characteristic value, we have that $\lambda_n\in (0,\sqrt{\frac{g}{L_0}})$.

We  prove that the sequence $(\lambda_n)_{n\geq 1}$ is  decreasing. Indeed, if $\lambda_m<\lambda_{m+1}$ for some $m\geq 1$, we make use of Lemma \ref{LemGammaDecrease}(2) to have $\frac{\lambda_m^2}{\gamma_m(k,\lambda_m)}< \frac{\lambda_{m+1}^2}{\gamma_m(k,\lambda_{m+1})}$.
Note that  $\gamma_m(k,\lambda_{m+1}) > \gamma_{m+1}(k,\lambda_{m+1})$, hence
\[
gk^2= \frac{\lambda_m^2}{\gamma_m(k,\lambda_m)} <  \frac{\lambda_{m+1}^2}{\gamma_{m+1}(k,\lambda_{m+1})} =gk^2.
\]
That contradiction tells us that $(\lambda_n)_{n\geq 1}$ is a decreasing sequence.  

To complete the proof of Theorem \ref{ThmGeneral},  we now prove that $\lim_{n\to \infty}\lambda_n =0$. Indeed,  suppose that $\lim_{n\to \infty}\lambda_n=c_0>0$, one has that $\lambda_n\geq c_0$ for all $n\geq 1$. By Lemma \ref{LemGammaDecrease}(2) again, it yields
\[
\frac{c_0^2}{\gamma_n(k,c_0)} \leq \frac{\lambda_n^2}{\gamma_n(k,\lambda_n)} =\frac1{gk^2}.
\]
As a result, we have
\[
c_0^2\leq \frac{\gamma_n(k,c_0)}{gk^2}.
\]
Letting $n\to \infty$, we obtain a contradiction that $c_0\leq 0$. Hence, $\lim_{n\to \infty}\lambda_n =0$.
 The proof of Theorem \ref{ThmGeneral} is complete.
\end{proof}

\section{Numerical simulation}\label{sec:numerics}

In this section, we describe our shooting‐method algorithm for computing the Rayleigh–Taylor growth‐rate spectrum for a linear  density profile
\begin{equation}\label{RhoLinear}
\rho_0(x) = \rho_- + \frac{\rho_+-\rho_-}{h} (x+h).
\end{equation}
and for an exponential density profile  
\begin{equation}\label{RhoExp}
\rho_0(x)= \rho_- e^{\frac1h \ln(\frac{\rho_+}{\rho_-}) (x+h)}.
\end{equation}
For each transverse wavenumber $k>0$, we compute nontrivial solutions of \eqref{2ndOrderEqPhi} (subject to \eqref{ODEboundary}) using a \emph{shooting method} that yields the discrete spectrum $\{\lambda_j(k) \}_{j\geq 1}$.

\subsection{Shooting procedure}
To provide a shooting procedure, we rewrite the ODE \eqref{2ndOrderEqPhi}  as the first order system 
\begin{equation}\label{system-y12}
\begin{cases}
\phi' = \psi,\\[6pt]
\psi' = -\dfrac{\rho_0'}{\rho_0} \psi + k^2\phi   - \dfrac{gk^2}{\lambda^2}\dfrac{\rho_0'}{\rho_0}\phi.
\end{cases}
\end{equation}
Hence, we define the surface-residual
\begin{equation}\label{Evans}
E(\lambda) = \lambda^2\rho_+ \psi(0)+gk^2\rho_+\phi(0).
\end{equation}
A characteristic value $\lambda$ satisfies $E(\lambda)=0$. A function like $E(\lambda)$ that is constructed from a basis of ODE solutions and whose zeros correspond to eigenvalues of a linearized PDE system is often called an Evans function, see \cite{BHLL18, KP}.
The following table summarizes the algorithm.
\begin{center}
\fbox{
\begin{minipage}{0.95\textwidth}

\textbf{Algorithm: Shooting–Evans Method}

\medskip

\textbf{Input:} $g,\, k,\, \rho_-,\, \rho_+,\, h,$ 
search interval $[\lambda_{\min}, \lambda_{\max}]$

\textbf{Output:} Discrete spectrum $\{\lambda_j(k)\}$

\medskip

Initialize density profile $\rho_0(x)$.
Construct a grid $\{\lambda_i\} \subset [\lambda_{\min}, \lambda_{\max}]$.

\medskip

\textbf{for each candidate value $\lambda$ do}

\quad Rewrite the second–order ODE as the first–order system:
\[
\begin{cases}
y_1' = y_2, \\
y_2' =
- \dfrac{\rho_0'}{\rho_0} y_2
+ k^2 y_1
- \dfrac{gk^2}{\lambda^2}
\dfrac{\rho_0'}{\rho_0} y_1
\end{cases}
\]

\quad Impose bottom boundary condition:
\[
y_1(-h)=0, \qquad y_2(-h)=1
\]

\quad Integrate system on $x \in [-h,0]$.

\quad Extract $\phi(0)=y_1(0)$ and $\phi'(0)=y_2(0)$.

\quad Compute Evans function:
\[
E(\lambda)
=
\lambda^2 \rho_+ \phi'(0)
+
g k^2 \rho_+ \phi(0)
\]

\textbf{end for}

\medskip

Detect sign changes of $E(\lambda)$.

\textbf{for each sign-change interval do}

\quad Apply root–finding method (bisection/Brent).

\quad Compute refined root $\lambda_j$.

\textbf{end for}

\medskip

Return all eigenvalues $\{\lambda_j(k)\}$.

\end{minipage}
}
\end{center}

\subsection{Linear density profile}

For the linear profile \eqref{RhoLinear}, let us fix
\begin{equation}\label{parameters}
\rho_- = 0.1,\quad \rho_+ = 0.2,\quad h = 2, \quad L = 1,\quad g = 9.8.
\end{equation}
Hence, we have that $\sqrt{\frac{g}{L_0}}\approx 2.21$ and the search interval is $[0,2.21]$. 

For $k=680$, Figure \ref{fig:first5bindings} shows the first five characteristic values $\lambda_j$ $(1\leq j\leq 5)$ plotted with black dots. As shown in Figure \ref{fig:first5bindings}, the top characteristic values are strictly decreasing $\lambda_1>\dots>\lambda_5$ and are bounded above by $\sqrt{\frac{g}{L_0}}\approx 2.21$, which is indicated by a dashed blue line.
\begin{figure}[ht]
  \centering
  \includegraphics[width=0.7\linewidth]{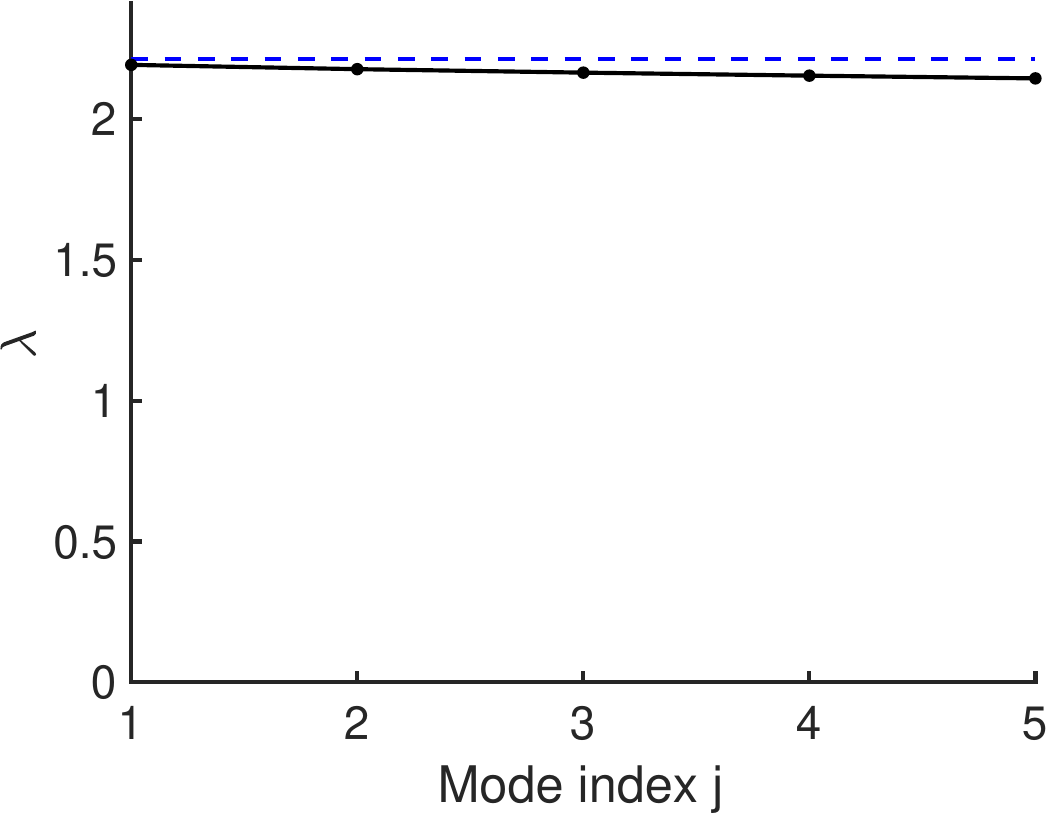}
  \caption[
    First five modes and uniform bound for the linear profile
  ]{Computed characteristic values \(\lambda_j\) for \(j=1,\dots,5\) when $k = 680$.}
  \label{fig:first5bindings}
\end{figure}

In Figure \ref{fig:linear:fs}(a), we plot the top characteristic mode $\lambda_1$, and in Figure \ref{fig:linear:fs}(b), we plot the second top characteristic $\lambda_2$. 

\begin{figure}[ht]
 \begin{center}
$
\begin{array}{lr}
(a) \includegraphics[scale=0.35]{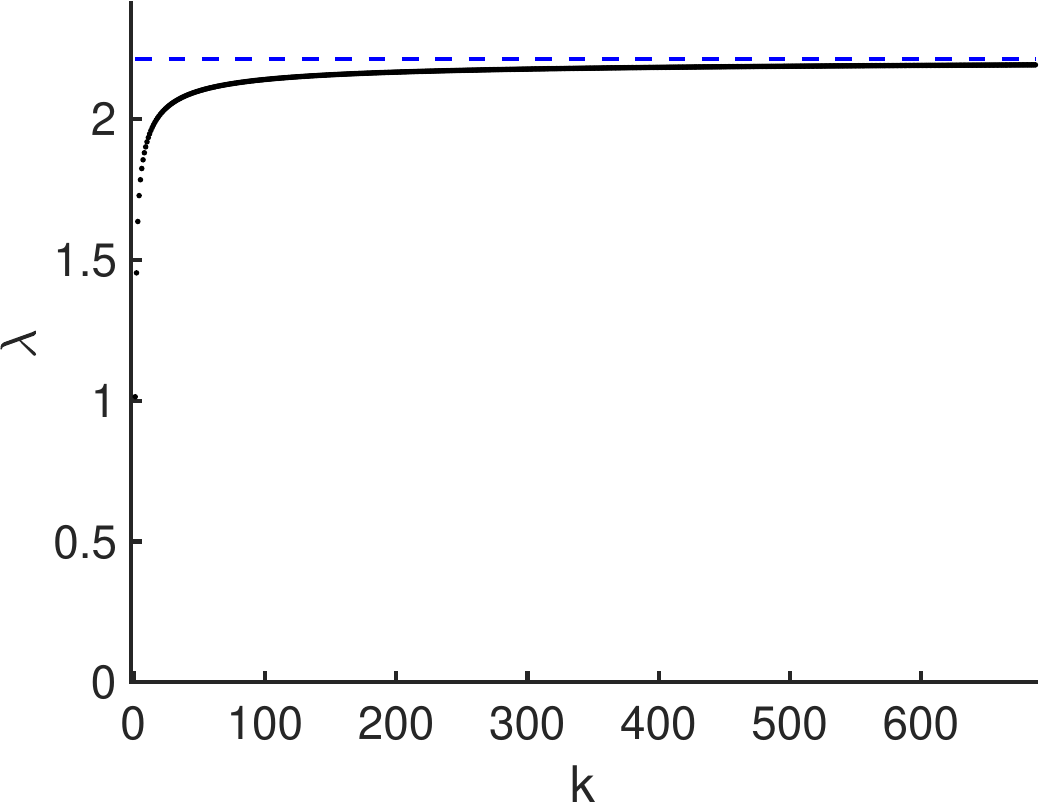} & (b) \includegraphics[scale=0.35]{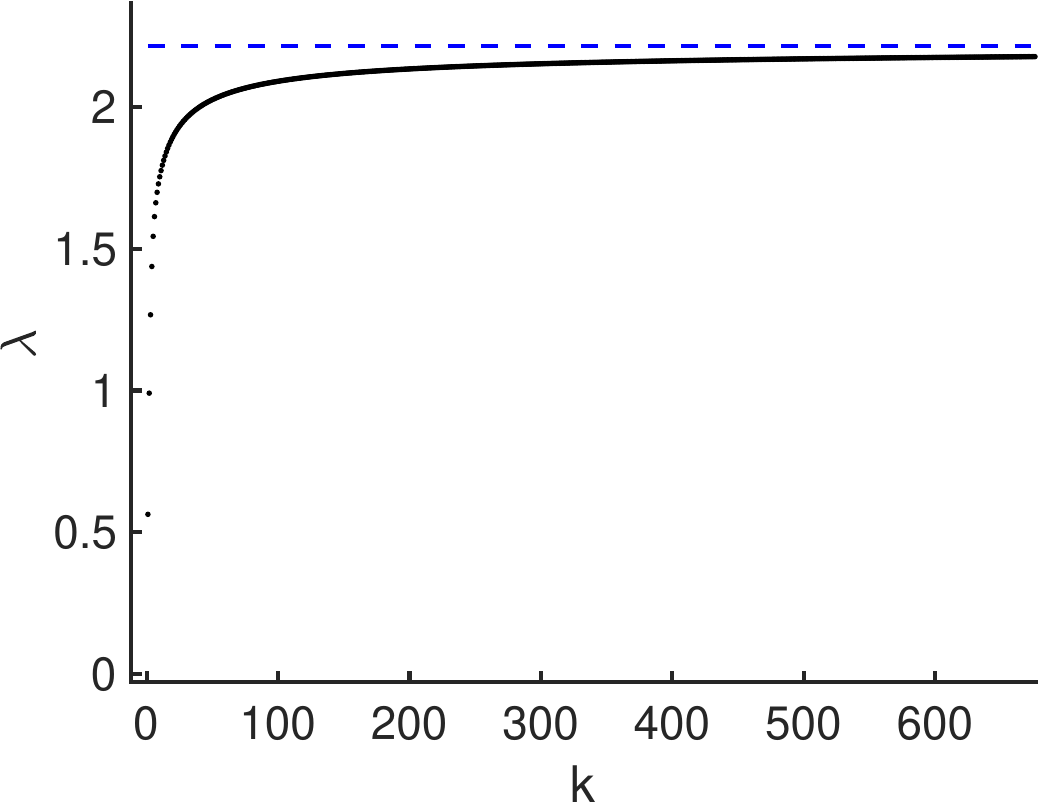}
\end{array}
$
\end{center}
\caption{(a) Plot of $\lambda_1(k)$ against $k$ (black dots) and the upper bound on eigenvalues (blue dashed line). (b) Plot of $\lambda_2(k)$ against $k$ (black dots) and the upper bound on eigenvalues (blue dashed line). The parameters are $\rho_- = 0.1$, $\rho_+ = 0.2$, $h = 2$, $L = 1$, and the Atwood number is approximately $0.3466$.}
\label{fig:linear:fs}
\end{figure}



All the numerical experiments confirm both the infinite descending chain  $\lambda_j(k) \searrow 0$ for fixed $k$ and the uniform bound $\sqrt{g/L_0}$ for large $k$.

\subsection{Exponential density profile} In this section, we consider an exponential density profile of the form 
\[
\rho_0(x) = \rho_-e^{a(x+h)},
\]
where $a\in \R_+$ is such that $\rho_0(0) = \rho_+$. Consequently, $a = \frac1h \ln(\rho_+/\rho_-)$ and 
\[
\rho_0(x) = \rho_+ e^{\ln(\rho_+/\rho_-)\frac{x}h}=  \rho_+\left(\frac{\rho_+}{\rho_-}\right)^{x/h},
\]
that is \eqref{RhoExp}.

Now $\frac{\rho_0'(x)}{\rho_0(x)} = a=\frac{\ln(\rho_+/\rho_-)}{h}.$ Thus the ODE to solve for computing the Evans function is given by
\eq{
y_1'&= y_2,\\
y_2'&= k^2\left(1-\frac{ag}{\lambda^2}\right)y_1-ay_2,
}{\notag}
with initial conditions $y_1(-h) = 0$ and $y_2(-h) = 1$. 

We define $\mu:= \sqrt{a^2+4k^2(1-\frac{ag}{\lambda^2})}$. If $\mu = 0$, then the system to solve is
\eq{
\mat{y_1\\y_2}'&= \mat{0&1\\- \frac14 a^2&-a}\mat{y_1\\y_2}.
}{\notag}
If $\lambda$ is a characteristic value for $\mu =0 $, then $E(\lambda) =0$ implies that $\lambda^2 = -gk^2y_1(0)/y_2(0)$ and from the defintion of $\mu$, that 
\[
0 = a^2+4k^2(1-\frac{ag}{\lambda^2})=a^2+4k^2+4a \frac{y_2(0)}{y_1(0)}
\]
as seen via substitution for $\lambda^2$. Then 
\[
k^2 = -a \frac{y_2(0)}{y_1(0)}-\frac14 a^2.
\]
We numerically verify for our choice of parameters that $-ay_2(0)/y_1-a^2/4 \approx -0.4343$. Thus, for our choice of parameters, there are no characteristic values for $\mu = 0$ since $k$ must be real valued.

If $\mu \neq 0$, then the solution to this ODE is given by
\eq{\mat{y_1(x)\\y_2(x)} &= e^{\frac12(-a+\mu)(x+h)}\mat{\frac1\mu \\ -\frac{a}{2\mu}+\frac12} +e^{-\frac12(a+\mu)(x+h)}\mat{-\frac1\mu\\ \frac{a}{2\mu}+\frac12}.}{\notag}
Thus the Evans function \eqref{Evans} is given by
\eq{
E(\lambda) &= \rho_+(\lambda^2y_2(0)+gk^2y_1(0))\\
&= \frac{\rho_+e^{-\frac12 ah}}{\mu}\left[ \lambda^2\left( -a\sinh(\frac12 \mu h)+\mu \cosh(\frac12 \mu h)\right)+2gk^2\sinh(\frac12 \mu h)\right]. 
}{\notag}
By substituting 
\[
\lambda^2 = \frac{4k^2ag}{a^2+4k^2-\mu^2},
\]
which is derived from the definition of $\mu$, and rearranging terms, we find that
\eq{E(\lambda) = \frac{2g\rho_+k^2e^{-ah/2}}{\mu(a^2+4k^2-\mu^2)}\left[ (4k^2-a^2-\mu^2) \sinh(\frac12 \mu h)+2a\mu \cosh(\frac12 \mu h) \right].}{\notag}

If $E(\lambda) = 0$, then 
\eq{(4k^2-a^2-\mu^2)\sinh(\frac12\mu h)+2a\mu \cosh(\frac12 \mu h) = 0,}{\notag} or alternatively, 
\eq{(4k^2-a^2-\mu^2)\tanh(\frac12 \mu h) + 2a\mu = 0.}{\notag} 
If $\mu >0$, then
\eq{
\lambda^2 = -\frac{2gk^2\tanh(\mu h/2)}{\mu-a\tanh(\mu h/2)}.
}{\notag}
The denominator is zero if $\mu = 0$, and the derivative of the denominator with respect to $\mu$ is always greater than $\frac12(2-ah) = \frac12 (2- \ln(\rho_+/\rho_-))$. For the choice of $\rho_+$ and $\rho_-$ used in our numerical study, this quantity is positive, which implies $\lambda$ is purely imaginary. Hence, we conclude $\mu\not >0$.

Thus, we may conclude that $\mu$ is purely imaginary, say $\mu = i\xi$ with $\xi>0$. Then we have that 
\eq{
(4k^2-a^2+\xi^2)\tan(\frac12 \xi h)+2a\xi = 0.
}{\notag}
We may solve for $k$ in terms of $\xi$ to obtain 
\eq{k(\xi)=\frac{1}{2}\sqrt{\frac{a^2-\xi^2}{4}-\frac{1}{2}a\xi\cot(\frac12 \xi h)}.}{\notag}
Then 
\eq{
\lambda(\xi) = \sqrt{\frac{4k^2(\xi)ag}{a^2+4k^2(\xi)+\xi^2}}.
}{\notag}
Thus, $(\xi,k(\xi),\lambda(\xi))$ is a curve parameterized by $\xi>0$ that passes through the eigenvalues $\lambda$ corresponding to $k$; see Figure \ref{fig:exponential:parameterized} (a), which was plotted using the online plotting tool Desmos. To solve for these eigenvalues for fixed $k$, we use a bisection method to find each root of 
\[
g(\xi):= (4k^2-a^2+\xi^2)\tan(\frac12\xi h)+2a\xi
\]
that lies between poles of $\tan$, that is for $-\frac{\pi}h+\frac{2\pi j}h<\xi< \frac{\pi}h+\frac{2\pi j}h$, with  $j\in \mathbb{N}$. There is only one root between each pole. Indeed, for $k\neq 0$, 
\eq{
\cos^2(\frac12 h\xi)g'(\xi)
&= 2\xi\sin(\frac12  h\xi )\cos(\frac12  h\xi)+\frac12 h(4k^2-a^2+\xi^2)+a\cos^2(\frac12  h\xi)\\ &\geq 2h-a^2+\frac12 h\xi^2-2\xi \\
&= \frac{h}{2}\left[\left(\xi-\frac{1}{2h}\right)^2-\frac{1}{4h^2}+\frac{2}{h}(2h-a^2)\right].}{\notag}
For our choice of parameters, $-\frac{1}{4h^2}+\frac{2}{h}(2h-a^2)\approx 0.74$, thus $g'(\xi)>0$ and there is only one root of $g(\xi)$ between each of its poles. 

In Figure \ref{fig:exponential:fs}(a), we plot the top characteristic mode $\lambda_1$, and in Figure \ref{fig:exponential:fs}(b), we plot the second top characteristic $\lambda_2$. For $k=200$, Figure \ref{fig:exponential:parameterized} (b) shows the first five characteristic values $\lambda_j$ $(1\leq j\leq 5)$ plotted with black dots. As shown in Figure \ref{fig:exponential:parameterized}(b), the top characteristic values are strictly decreasing $\lambda_1>\dots>\lambda_5$ and are bounded above by $\sqrt{\frac{g}{L_0}}=\sqrt{ag}\approx 3.8786$, which is indicated by a dashed blue line.

\begin{figure}[ht]
 \begin{center}
$
\begin{array}{lr}
(a) \includegraphics[scale=0.27]{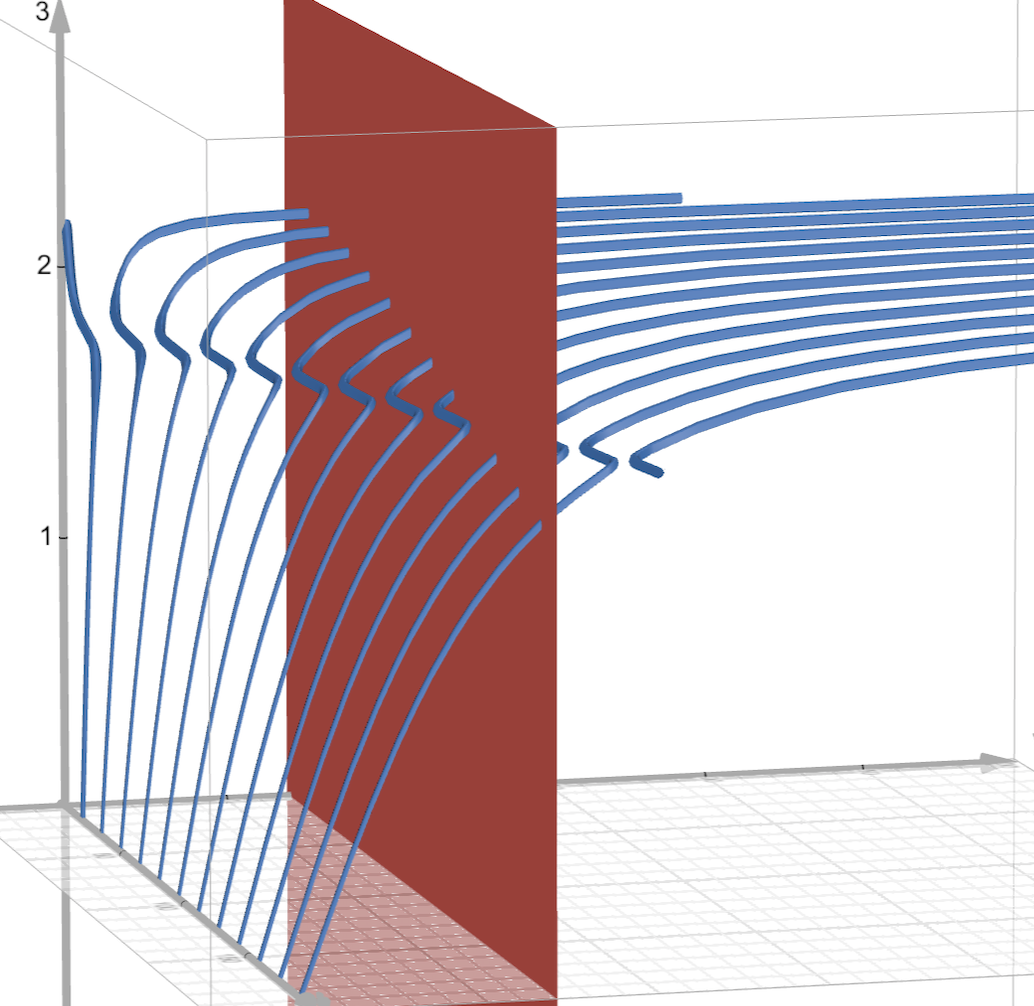} & (b) \includegraphics[scale=0.35]{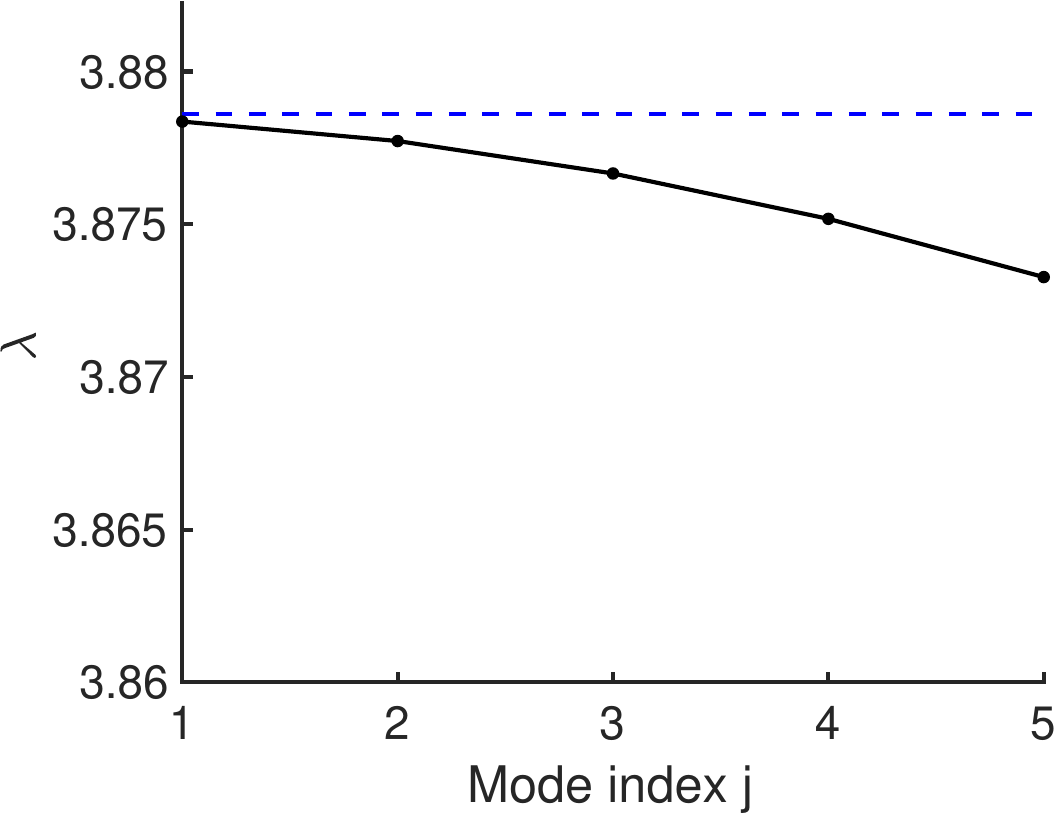}
\end{array}
$
\end{center}
\caption{(a) Plot of $(\xi,k(\xi),\lambda(\xi))$ with a blue line. A red plane corresponds to a specific value of $k$ and the intersection of the blue curve with the red plane indicates the eigenvalues for that value of $k$. (b) Computed characteristic values \(\lambda_j\) for \(j=1,\dots,5\) when $k = 200$. }
\label{fig:exponential:parameterized}
\end{figure}

\begin{figure}[ht]
 \begin{center}
$
\begin{array}{lr}
(a) \includegraphics[scale=0.35]{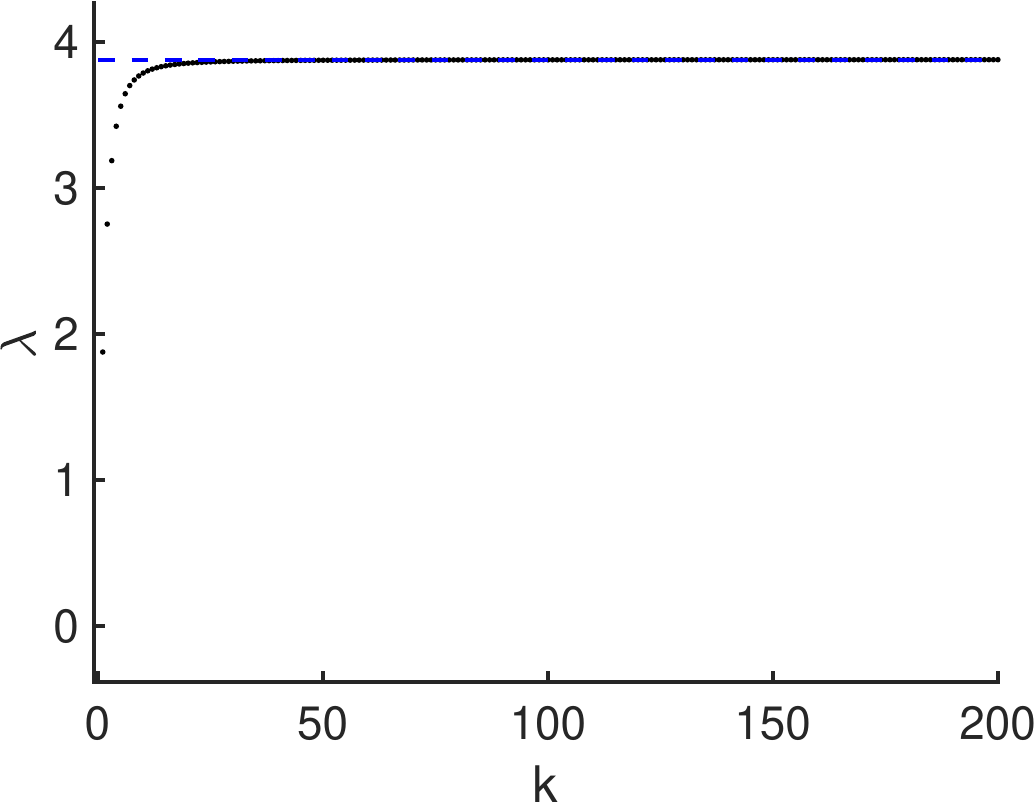} & (b) \includegraphics[scale=0.35]{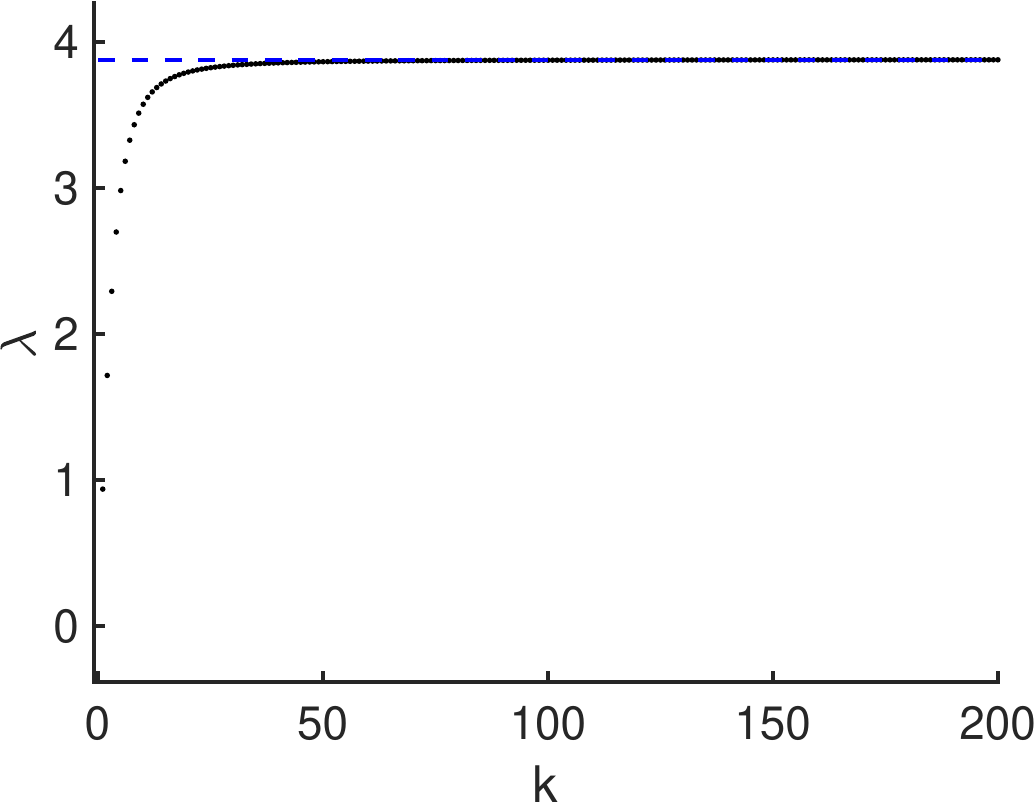}
\end{array}
$
\end{center}
\caption{(a) Plot of $\lambda_1(k)$ against $k$ (black dots) and the upper bound on eigenvalues (blue dashed line). (b) Plot of $\lambda_2(k)$ against $k$ (black dots) and the upper bound on eigenvalues (blue dashed line). The parameters are $\rho_- = 0.1$, $\rho_+ = 1$, $h = 1.5$, $L = 1$, $a \approx 1.535$, and the Atwood number is $90/99$.}
\label{fig:exponential:fs}
\end{figure}

\subsection{Code}
Figure \ref{fig:exponential:parameterized}(a) was plotted using Desmos. The code for creating all the other figures is available at the Nonlinear Waves repository on GitHub; see \cite{stablab}. The Python code was translated from the Matlab code with the aid of ChatGPT.



\clearpage
\begin{thebibliography} {999999}

\bibitem{BHLL18}
\textsc{B. Barker, J. Humpherys, J. Lytle, G. Lyng}, Evans function computation for the stability of travelling waves, \textit{Phil. Trans. R. Soc. A} \textbf{376}: 20170184.



\bibitem{Bea81}
\textsc{J. T. Beale}, The initial value problem for the Navier–Stokes equations with a free surface, \textit{Commun. Pure Appl. Math.} \textbf{34} (1981) 359--392.

\bibitem{Cha61}
\textsc{S. Chandrasekhar}, Hydrodynamics and Hydromagnetic Stability, Oxford University Press, London, 1961.




\bibitem{CL00} 
\textsc{C. Cherfils and O. Lafitte}, Analytic solutions of the Rayleigh equation for linear density profiles \textit{Phys. Rev. E} \textbf{62}, 2967 (2000).

\bibitem{CCLR01}
\textsc{C. Cherfils-Cl\'erouin, O. Lafitte, P.-A. Raviart}, Asymptotic results for the linear stage of the Rayleigh-Taylor instability. In  \textit{Adv. Math. Fluid Mech.}, Birkh\"auser, Basel, 47–71 (2001).


\bibitem{Eva1}
\textsc{J. W. Evans},  Nerve axon equations I: Linear approximations, \textit{Indiana Univ. Math. J.}, \textbf{21} 1971/72, pp. 877--955.

\bibitem{Eva2}
\textsc{J. W. Evans}, Nerve axon equations II:Stability at rest, \textit{Indiana Univ. Math. J.} \textbf{22} 1972/73, pp. 75--90.

\bibitem{Eva3}
\textsc{J. W. Evans},  Nerve axon equations III: Stability of the nerve impulse, \textit{Indiana Univ. Math. J.} \textbf{22} 1972/73, pp. 577--593.

\bibitem{Eva4}
\textsc{J. W. Evans}, Nerve axon equations IV: The stable and the unstable impulse, \textit{Indiana Univ. Math. J.} \textbf{24} 1974/75, pp. 1169--1190.


\bibitem{GH03}
\textsc{Y. Guo, H. J. Hwang},
On the dynamical Rayleigh--Taylor instability, \textit{Arch. Rational Mech. Anal.} \textbf{167} (2003), pp. 235--253.


\bibitem{GT13}
\textsc{Y. Guo, I. Tice}, Local well-posedness of the viscous surface wave problem without surface tension, \textit{Anal. PDE} \textbf{6}:2 (2013), 287--369.

\bibitem{Gre00}
\textsc{E. Grenier}, On the nonlinear instability of Euler and Prandtl equations, \textit{Commun. Pure Appl. Math.} \textbf{ 53}, (2000), pp.  1067--1091.


\bibitem{HL03}
\textsc{B. Helffer,  O. Lafitte}, Asymptotic methods for the characteristic values of the Rayleigh equation for the linearized Rayleigh-Taylor instability, \textit{ Asymptotic Analysis} \textbf{ 33} (2003), pp. 189--235.


\bibitem{Kull91}
\textsc{H. Kull}, Theory of the Rayleigh-Taylor instability, \textit{Phys. Rep.} \textbf{206} (1991), pp. 197--325.


\bibitem{Lan13}
\textsc{D. Lannes}, \textit{The water waves problem: Mathematical analysis and asymptotics}, Mathematical survey
and monographs, vol. 188, 2013,  AMS, Providence.



\bibitem{Laf01}
\textsc{O. Lafitte}, Sur la phase lin\'eaire de l'instabilit\'e de Rayleigh-Taylor. S\'eminaire Equations aux D\'eriv\'ees Partielles du Centre de Math\'ematiques de l'Ecole Polytechnique, Ann\'ee 2000–2001.

\bibitem{Laf08}
\textsc{O. Lafitte}, The linear and nonlinear Rayleigh-Taylor instability for the quasi-isobaric profile, \textit{Phys. D} \textbf{237} (2008), pp. 1602--1639.

\bibitem{LN22}
\textsc{O. Lafitte, T.-T. Nguyen}, Spectral analysis of the incompressible viscous Rayleigh--Taylor system, \textit{Water Waves} \textbf{4} (2022), pp. 259--305.

\bibitem{Lin98}
\textsc{J. D. Lindl}, 1998. Inertial Confinement Fusion. Springer.






\bibitem{Rem00}
\textsc{B. A. Remington, R. P. Drake, H. Takabe, D. Arnett},  A review of astrophysics experiments on intense lasers, \textit{Phys. Plasmas} \textbf{7} (2000), pp. 1641--1652.


\bibitem{Str83}
\textsc{J.W. Strutt (Lord Rayleigh)}, Investigation of the character of the equilibrium of an incompressible heavy fluid of variable density, \textit{Proc. London Math. Soc.} \textbf{14} (1883), pp. 170--177.


\bibitem{TX22}
\textsc{Z. Tan, S. Xu}, The Rayleigh--Taylor instability of incompressible Euler equations in a horizontal slab domain, \textit{J. Differential Equations} \textbf{319} (2022), pp. 100--130.

\bibitem{Tay50}
\textsc{G. Taylor}, The instability of liquid surfaces when accelerated in a direction perpendicular to their planes, \textit{Proc. R. Soc. Lond. Ser. A} \textbf{201} (1950), pp. 192--196.


\bibitem{Zhou17_1}
\textsc{Y. Zhou},  Rayleigh–Taylor and Richtmyer–Meshkov instability induced flow, turbulence, and mixing. I., \textit{Phys. Rep.} \textbf{720–722} (2017), pp. 1--136.

\bibitem{Zhou17_2}
\textsc{Y. Zhou}, Rayleigh–Taylor and Richtmyer–Meshkov instability induced flow, turbulence, and mixing. II., \textit{Phys. Rep.} \textbf{723–725} (2017), pp. 1--160.

\bibitem{KP}
    \textsc{Todd Kapitula and Keith Promislow}, Spectral and dynamical stability of nonlinear waves, \textit{Springer} (2013).

\bibitem{HoZ}
	\textsc{Howard, Peter and Zumbrun, Kevin}, The {E}vans function and stability criteria for degenerate viscous shock waves, \textit{Discrete and Continuous Dynamical Systems. Series A.} (2004), \textbf{Vol 10}, pp. 837--855.

\bibitem{stablab}
\textsc{Barker, Blake and Humpherys, Jeffrey and Lytle, Joshua and Zumbrun, Kevin}, {STABLAB}: A {MATLAB}-based numerical library for Evans function computation, 
\textit{https://github.com/nonlinear-waves/stablab.git}

    

\end{thebibliography}
\end{document}